\documentclass[a4paper,12pt]{article}
\topmargin=-9mm
\oddsidemargin=-0mm
\evensidemargin=-0mm
\textheight=24cm
\textwidth=15.6cm
\parindent=5pt
\parskip=5pt
%
%
%
\RequirePackage[T1]{fontenc}
\RequirePackage{fix-cm}
%
%
%
%
%
%
%
\usepackage{amsmath,amsfonts,amsthm}
\usepackage{graphicx} 
\usepackage[unicode,final]{hyperref} 
\usepackage{cite}
\usepackage{caption}
\usepackage{mathtools}
\usepackage{newtxtext}
\usepackage[varvw]{newtxmath}
\usepackage{bm}
\usepackage[T1]{fontenc}
\usepackage[utf8]{inputenc}
\usepackage{xcolor}
\usepackage{booktabs}
\usepackage{mathtools}
\usepackage{tikz, pgfplots}
%
%
\pgfplotsset{compat=1.17}
\usepgfplotslibrary{groupplots}
\usetikzlibrary{shapes.geometric}
\usetikzlibrary{backgrounds}
\usetikzlibrary{intersections}

\newcommand{\ddt}{\partial_t}
\newcommand{\ddx}{\partial_x}

\newtheorem{Def}{Definition}

\newtheorem{Prop}{Proposition}

\newtheorem{Cor}{Corollary}
\newtheorem{Rmk}{Remark}

\allowdisplaybreaks[4]
\newcommand{\R}{\mathbb{R}}
\newcommand{\N}{\mathbb{N}}
\newcommand{\Z}{\mathbb{Z}}

\newcommand{\eps}{\varepsilon}
\providecommand{\keywords}[1]{\textit{Keywords:} #1}

\makeatletter
\renewenvironment{proof}[1][\proofname]{\par
  \normalfont%
  \topsep6\p@\@plus6\p@ \trivlist%
  \item[\hskip\labelsep{\itshape#1}\@addpunct{\itshape.}]\ignorespaces
  }{\qed\endtrivlist}
\renewcommand{\proofname}{Proof}
\makeatother

\usepackage{soul} 
%
%
\begin{document}
\title{Central schemes for networked scalar conservation laws}
\author{
  Michael~Herty$^{1}$ \and
  Niklas~Kolbe$^{1}$\footnote{Corresponding author} \and
  Siegfried~Müller$^{1}$
}
%
\date{
 \small
  $^1$Institute of Geometry and Practical Mathematics, RWTH Aachen University,\\ Templergraben 55, 52062 Aachen, Germany\\
   \smallskip
   {\tt \{herty,kolbe,mueller\}@igpm.rwth-aachen.de} \\
   \smallskip
   \today
 }
\maketitle
\begin{abstract}
We propose a novel scheme to numerically solve scalar conservation laws on networks without the necessity to solve Riemann problems at the junction. The scheme is derived using the relaxation system introduced in [Jin and Xin, Comm. Pure Appl. Math. 48(3), 235–276 (1995)] and taking the relaxation limit also at the nodes of the network. The scheme is mass conservative and yields well defined and easy-to-compute coupling conditions even for general networks. We discuss higher order extension of the scheme and applications to traffic flow and two-phase flow. In the former we compare with results obtained in literature.
\par
\vspace{0.5em}
\noindent
\keywords{Coupled conservation laws, finite-volume schemes, coupling conditions}

\end{abstract}
%
\section{Introduction}\label{sec:intro}
Research on mathematical models on networks understood as directed (one-dimensional) graphs has been successfully conducted over the last decades and we refer to the recent survey \cite{MR3200227} for details and references. Such models have various applications, such as gas dynamics in pipelines~\cite{k.bandaCouplingConditionsGas2006,MR2818413,MR4039520}, vehicular traffic on road networks~\cite{garavelloTrafficFlowNetworks2006,MR1338371}, production systems \cite{MR2665143} and blood flow through systems of blood vessels~\cite{formaggiaMultiscaleModellingCirculatory1999} to mention only a few. Apart from modeling questions regarding the partial differential equations on the edges, a major modeling challenge is the description of the dynamics at the network nodes, where adjacent edges connect. Starting with \cite{MR1338371,k.bandaCouplingConditionsGas2006} networked (systems of) conservation or balance laws are defined by  (physically induced) coupling conditions, see \cite{MR3200227} for examples, and e.g  \cite{MR4175145} for a hierarchical derivation in the case of gas dynamics. Those conditions yield under suitable assumptions boundary conditions. Using wave--front--tracking techniques well--posedness of such coupled problems could be established, see e.g.  \cite{MR1338371,garavelloTrafficFlowNetworks2006,MR2438778,MR2377285,MR2735916}. A key analytical concept here is the notion of Riemann solvers \cite{garavelloTrafficFlowNetworks2006,MR3553143} or half--Riemann problems \cite{MR2237163}. Among others, those concepts require an analytic expression of wave curves of the corresponding models. Here, we are interested in the numerical treatment of networked conservation laws. Numerical methods based on the Riemann solvers have been proposed already in \cite{MR2045460,MR1338371,MR2223073} and have recently gained interest in view of high--order methods \cite{MR3396266,Banda2016,MR2198203,MR3227276,MR2963941,MR3315275,MR3528308},  property--preserving schemes \cite{MR4026004,MR4260434} and also for problems where Lax--curves and eigenvalues are not explicit \cite{MuellerVoss:2006,MR2600931} or not available \cite{HantkeMueller:2018,HantkeMueller:2019}. Regarding the development of  efficient schemes, most of the higher--order schemes rely on the linearization of the coupling conditions such that the Lax--curves are obtained trivially, see e.g. \cite{Banda2016,MR3227276} for more details. Linearization techniques have also been used  to avoid the problem of the explicit computations of eigenvalues and Lax-curves in two--phase problems, see e.g. \cite{MR3265942}.  
The question of property preserving numerical schemes across networks has been recently investigated in view of well-balanced networked equations \cite{MR4026004} and entropy-preserving schemes \cite{MR4260434}.  Note that for simplicity we consider here only the case of conservation laws but the numerical schemes directly extend to the case of balance laws. Further, we focus here on finite-volume or discontinuous Galerkin schemes whereas approaches based on finite-element schemes, such as \cite{MR3744998}, require a different treatment of coupling conditions. Similarly, a construction of vanishing viscosity solutions, which has been addressed in the schemes studied in \cite{karlsenConvergenceGodunovScheme2017, towersExplicitFiniteVolume2022}, avoids the use of Riemann problems at the expense of formally treating parabolic systems. 
\par 
In this manuscript we develop a numerical scheme that does not require a Riemann solver at the junction. To this end, we embed the coupling problem of nonlinear equations in the coupling of linear relaxation systems following the approach developed by e.g. Jin-Xin  cf.~\cite{jinRelaxationScheme1995}. Since for linear systems the Lax curves are multiples of the a priori computable
constant eigenvectors of the flux matrix, the computations will be explicit. A similar idea has been used in~\cite{KarlsenKlingenbergRisebro:2004} to approximate a scalar conservation law with discontinuous flux. Applying an implicit-explicit discretization in time leads further to an explicit scheme contrary to e.g. \cite{MR2600931}. 
Following an approach based on hyperbolic relaxation will allow us to define a numerical scheme that does not rely on the solution to the Riemann solvers at the node of the network. It can be extended to higher-order which is also demonstrated here and existing coupling conditions can be embedded in this framework as shown in an example on traffic flow.

\section{Notation and Preliminary Discussion}

A network is a directed graph consisting of edges and vertices or junctions. We restrict the discussion to scalar hyperbolic conservation laws posed on each edge and, due to the finite speed of propagation, to the problem  at a single coupling node, that we assume at position $x=0$. The dynamics on the adjacent edge $k$ reads
\begin{equation}\label{eq:scalarconservationnetwork}
  \ddt u^k + \ddx f_k(u^k) =0 \quad \text{in }\mathcal{E}_k\times(0,\infty),\quad k \in \delta^\mp,
\end{equation}
where the state variable $u^k$ is either given on an incoming edge parameterized by $\mathcal{E}_k=(-\infty,0)$ if $k\in \delta^-=\{1,\dots,N^- \}$ or on an outgoing edge parameterized by $\mathcal{E}_k=(0, \infty)$ if $k\in\delta^+=\{N^-+1, \dots,N^-+ N^+=N\}$. Here, the flux functions are  smooth, but not-necessarily convex or concave flux functions $f_1,\dots,f_{N}:$ $\R \rightarrow \R$.  The set of all edges is denoted by $\delta^{\mp}=\delta^- \cup \delta^+$. In addition, we assume given initial data that we denote on each edge $k\in \delta^\mp$ by $u^{k,0}$.
\par 
The coupling is described by a set of conditions on the traces at the coupling node of the form
\begin{equation}\label{eq:scalarcouplingconditions}
  \Psi[u^1(0^-, t), \dots, u^{N^-}(0^-, t), u^{N^-+1}(0^+, t), \dots u^{N}(0^+, t)]=0, \quad \text{for a.e. }t>0,
\end{equation}
assuming a mapping $\Psi:\R^{N} \rightarrow \R^\ell$. The number of coupling conditions $\ell$ required to obtain a well-posed problem depends on the number of edges and the choice of flux functions, see e.g. \cite{MR3200227} and the discussion following below. 
\par 
The typical (numerical) procedure to obtain conditions on the traces of the solution $u^k$ at $x=0$ relies on a suitable Riemann solver at the junction \cite{garavelloTrafficFlowNetworks2006}. Illustrated in the case $|\delta^+|=|\delta^-|=1$, where \eqref{eq:scalarconservationnetwork} can be rewritten without edge indices as 
\begin{subequations}\label{eq:scalarconservation11}
\begin{align}
 \ddt u + \ddx f_1(u) &=0,\quad \text{in}(-\infty, 0) \times (0,\infty),\\
 \ddt u + \ddx f_2(u) &=0,\quad \text{in} (0, \infty) \times (0,\infty),
\end{align}
\end{subequations}
the idea is to connect the traces $u_0^-$ and $u_0^+$ on the left and right of the interface to the coupling data $u_L$ and $u_R$ at the interface. The coupling data are determined such that the coupling conditions $\Psi(u_L,u_R) = 0$ are satisfied and the states $u_L$ and $u_R$ can be connected to the traces $u_0^-$ on the left and $u_0^+$ on the right by means of Lax curves corresponding to characteristic fields with negative and positive characteristic speeds, respectively. This problem is formulated using parameterized Lax-curves and reduced to a typically nonlinear but finite-dimensional system of equations, see e.g.~\cite{MR3200227,HertyMuellerGerhardXiangWang:2018,GugatHertyMueller:2017}.  
\par 

To elaborate on the well-posedness and derivation of coupling conditions we introduce the Riemann solver on the edges of the network, which reads
\begin{subequations}\label{eq:scalarriemannproblem}
  \begin{equation}
    \ddt u^k + \ddx f_k(u^k) =0,\quad k \in \delta^- \cup \delta^+
  \end{equation}
      with initial data given either by  
\begin{equation}
  u^k(x,0) = \begin{cases}
               u^{k}_0 &\text{if }x\leq 0,\\
               u^k_R &\text{if }x>0,\\
             \end{cases}
             \quad \text{ if }k\in\delta^-
\end{equation}
or otherwise by
\begin{equation}
  u^k(x,0) = \begin{cases}
               u^{k}_L &\text{if }x\leq 0,\\
               u^{k}_0 &\text{if }x>0,\\
             \end{cases}
             \quad \text{ if }k\in\delta^+.
\end{equation}
\end{subequations}
On incoming edges the right initial data of the Riemann problem at time $t=0$ denoted by $u^k_R$ is unknown whereas the left data $u^k_0$ is assumed known. Analogously, on outgoing edges the left initial data of \eqref{eq:scalarriemannproblem} denoted by $u^k_L$ is unknown whereas the right data $u^{k}_0$ is given. Unknown data is obtained by a Riemann solver.

\begin{Def}[Riemann solver for scalar networks]\label{def:riemannsolver}
  A Riemann solver for problem \eqref{eq:scalarriemannproblem} is a mapping that assigns right initial data on incoming edges and left initial data on outgoing edges to given initial data on the respective opposite side, i.e., 
  \begin{align*}
    &\mathcal{RS}: \, \R^{N}\rightarrow \R^{N},\\
    &(u^{1-}_0,\dots, u^{N^-}_0,  u^{N^-+1}_0, \dots, u^{N}_0) \mapsto  (u^{1}_R,\dots, u^{N^-}_R,  u^{N^-+1}_L, \dots, u^{N}_L),
  \end{align*}
  such that (a) waves of the solution to \eqref{eq:scalarriemannproblem} have negative speed on incoming edges and positive speed on outgoing edges and (b) $\Psi[u^{1}_R,\dots, u^{N^-}_R,  u^{N^-+1}_L, \dots, u^{N}_L] = 0$.
\end{Def}
  
A necessary condition for conservation of $u$ at the coupling node is the \emph{Kirchhoff} condition: 
  \begin{equation}\label{eq:kirchhoff}
    \Psi_1[u^1_0, \dots, u^{N}_0]=   \sum_{j\in \delta^-} f_j(u^j_0) - \sum_{k\in \delta^+} f_k(u^k_0) = 0.
  \end{equation}
Hence, to conserve the quantity $u$ in the junction of the network, the coupling data obtained by the Riemann solver needs to satisfy the condition
\begin{equation}\label{eq:kirchhoffriemann}
  \sum_{j\in \delta^-} f_j(u^j_R) = \sum_{k\in \delta^+} f_k(u^k_L).
\end{equation}
In the scalar network \eqref{eq:scalarconservationnetwork} condition \eqref{eq:kirchhoff} together with admissible boundary data, see~\cite{duboisBoundaryConditionsNonlinear1988}, lead to mass conservation at the coupling node. However, these conditions are not necessarily sufficient for well-posedness of the Riemann solver and the network problem, see~\cite{MR3200227}. 

\section{Coupled Relaxation System}\label{sec:relaxation}
We follow~\cite{jinRelaxationScheme1995}, where in addition to the scalar quantity $u$ and the flux function $f$ the auxiliary variable $v \in \R$, the \emph{relaxation rate} $\eps>0$ and the \emph{relaxation speed} $\lambda>0$ have been introduced and the following system is studied:
\begin{subequations}\label{eq:relsyst}
  \begin{align}
    \ddt u^\eps + \ddx v^\eps &= 0, && \text{in }\R\times(0, \infty), \label{eq:relsystu}\\
    \ddt v^\eps + \lambda^2 \, \ddx u^\eps &= \frac 1 \eps (f(u^\eps) - v^\eps),  && \text{in }\R\times(0, \infty). \label{eq:relsystv}
  \end{align}
\end{subequations}
The relaxation system is accompanied by  initial data given through a scalar function $u^0$ and $v^0=f(u^0)$.
As $\eps\rightarrow0$ the system attains the \emph{zero relaxation limit} $(u,v) \coloneq (u^0, v^0)$ for which \eqref{eq:relsystv} necessitates the local equilibrium
$\lim_{\eps\rightarrow 0} v^\eps = v = f(u)$ and \eqref{eq:relsystu} recovers the conservation law
\begin{equation}\label{eq:scalarcl}
  \ddt u + \ddx f(u) = 0.
  \end{equation}  
The Chapman-Enskog expansion~\cite{chapmanMathematicalTheoryNonuniform1990} allows for an interpretation of \eqref{eq:relsyst} as a dissipative equation and shows that  $v^\eps = f(u^\eps) + O(\epsilon^2)$.
The \emph{subcharacteristic condition} 
\begin{equation}\label{eq:subcharacteristic}
  -\lambda \leq f^\prime(u^\eps) \leq \lambda \quad \text{for all }u^\eps.
  \end{equation}
 introduced in \cite{liuHyperbolicConservationLaws1987} guarantees that the dissipative approximation is well-posed. The dissipative equation and the conservation law were shown to have the same asymptotic behavior as the relaxation rate goes to zero in \cite{chenHyperbolicConservationLaws1994}.  An eigenvalue analysis of the relaxation system, see  Appendix~\ref{app:eigenvalues}, reveals that it is hyperbolic, has the eigenvalues $-\lambda$ and $\lambda$ and can be rewritten in terms of its characteristic variables  $w^{\eps\mp}= v^\eps/2 \mp \lambda u^\eps/2 $ as
\begin{subequations}\label{eq:characteristics}
  \begin{align}
    \ddt w^{\eps-} - \lambda \ddx w^{\eps-} &= \frac{1}{\eps} \left( f\left(  \frac{w^{\eps+}-w^{\eps-}}{\lambda}\right)  - w^{\eps+} - w^{\eps-}\right), \\
    \ddt w^{\eps+} + \lambda \ddx w^{\eps+} &= \frac{1}{\eps} \left( f\left(  \frac{w^{\eps+}-w^{\eps-}}{\lambda}\right)  - w^{\eps+} -w^{\eps-}\right).
  \end{align}
\end{subequations}

Moreover, the forward and backward Lax-curves of the relaxation system are given by straight lines in the phase plane as follows
\begin{subequations}\label{eq:laxcurves}
\begin{align}
  L^{1-}_\lambda(u^\eps_0, v^\eps_0) &= \{ (u^\eps_0 - \sigma , v^\eps_0 + \sigma \, \lambda), ~\sigma\in \R \}, \\
  L^{2+}_\lambda(u^\eps_0, v^\eps_0) &= \{ (u^\eps_0 + \sigma, v^\eps_0 + \sigma \, \lambda),~\sigma \in \R \}.
\end{align}
\end{subequations}

\subsection{Relaxation System at 1-to-1 networks}\label{sec:relsyst11}
Since the relaxation system describes the behavior of the conservation law in the relaxation limit we use it as a tool to derive suitable coupling data for the 1-to-1 coupling network. To this end we will analyze system \eqref{eq:relsyst} in the 1-to-1 coupling case in this section and derive admissible coupling data. The relaxation limit will be taken in Section \ref{sec:scheme} in case of an \emph{asymptotic preserving} discretization of the system, which was shown to converge to the correct limit as the relaxation rate tends to zero, see \cite{jin2010asymptotic} and the references therein.

We consider a 1-to-1 network, which couples two relaxation systems of the form \eqref{eq:relsyst} on a single incoming and a single outgoing edge at the coupling node, and reads
\begin{subequations}\label{eq:relsyst11}
  \begin{align}
    \ddt u^\eps + \ddx v^\eps &= 0, && \text{in }\R\setminus \{0 \}\times(0, \infty), \label{eq:relsyst11u}\\
    \ddt v^\eps + \lambda_1^2 \, \ddx u^\eps &= \frac 1 \eps (f_1(u^\eps) - v^\eps),  && \text{in }(-\infty, 0)\times(0, \infty), \label{eq:relsyst11vl}\\
    \ddt v^\eps + \lambda_2^2 \, \ddx u^\eps &= \frac 1 \eps (f_2(u^\eps) - v^\eps),  && \text{in }(0, \infty)\times(0, \infty). \label{eq:relsyst11vr}
  \end{align}
\end{subequations}
While the scalar quantity $u^\eps$ is governed by the same equation left and right from the interface the scalar auxiliary variable $v^\eps$ determining the flux of $u^\eps$ is governed by the two equations \eqref{eq:relsyst11vl} and \eqref{eq:relsyst11vr}, which account for different flux functions $f_1$ and $f_2$ left and right from the coupling node.   
Note that we choose the same relaxation rate but allow for different relaxation speeds $\lambda_1, \lambda_2 > 0$ left and right from the node. We assume that subcharacteristic conditions of the form \eqref{eq:subcharacteristic} hold at both edges. Initial data is given through a smooth and compactly supported function $u^{\eps, 0}:\R \setminus \{0\}\rightarrow \R$ and the initial condition
\begin{equation}\label{eq:relsyst11init}
u^\eps(x,0)=u^{\eps,0}(x), \quad  v^\eps(x,0)=\chi_{(-\infty, 0)}(x)f_1(u^{\eps, 0}(x)) + \chi_{(0, \infty)}(x)f_2(u^{\eps, 0}(x))
\end{equation}
for all $x\in \R\setminus \{ 0\}$ with $\chi$ denoting the characteristic function. To close the system coupling conditions of the form 
\begin{equation}\label{eq:relsyst11coupling}
  \Psi\left[(u^{\eps}(0^-, t), v^{\eps}(0^-, t)), (u^{\eps}(0^+,t),v^{\eps}(0^+, t))\right]=0 \quad \text{for a.e. }t>0
\end{equation}
taking into account both variables of the system are required. Since a linear system of two conservation laws is given, two conditions are imposed for well-posedness, i.e. $\Psi:\R^4\rightarrow \R^2$.  At a fixed time $t>0$ we denote the traces left and right from the coupling node by $u_0^{\eps-}$, $v_0^{\eps-}$, $u_0^{\eps+}$ and $u_0^{\eps-}$. Analogously to Definition \ref{def:riemannsolver} we formally define a Riemann solver for the component-wise Riemann problem on the 1-to-1 network assigning coupling data to the traces, i.e.,
\begin{equation}\label{eq:riemannsolverrelsyst11}
\mathcal{RS}_\text{rel}: (u_0^{\eps-}, v_0^{\eps-}, u_0^{\eps+}, v_0^{\eps-}) \mapsto (u_R^{\eps}, v_R^{\eps}, u_L^{\eps}, v_L^{\eps}).
\end{equation}
In the following, the construction of \eqref{eq:riemannsolverrelsyst11} is discussed in detail. To obtain admissible boundary data, $(u_R^{\eps}, v_R^{\eps})$ needs to be connected to $(u_0^{\eps-}, v_0^{\eps-})$ by a wave with negative velocity, whereas  $(u_L^{\eps}, u_L^{\eps})$ needs to be connected to  $(u_0^{\eps+}, u_0^{\eps-})$ by a wave with positive velocity as shown in Figure~\ref{fig:wavestructure}. Thus, by the eigenvalue analysis of the relaxation system, we require the two conditions
\begin{equation}\label{eq:relaxationwaves}
  (u_R^{\eps}, v_R^{\eps}) \in L^{1-}_{\lambda_1}(u_0^{\eps-}, v_0^{\eps-}) \quad \text{and}\quad  (u_L^{\eps}, v_L^{\eps}) \in L^{2+}_{\lambda_2}(u_0^{\eps+}, v_0^{\eps+}).
\end{equation}
Well-posedness of \eqref{eq:riemannsolverrelsyst11} is obtained taking into account the two coupling conditions, which must be satisfied by the coupling data obtained by the Riemann solver. As the first coupling condition we impose the Kirchhoff condition \eqref{eq:kirchhoff} in \eqref{eq:relsyst11u} and obtain 
\begin{subequations}\label{eq:relsyst11couplingconditions}
  \begin{equation}
\Psi_1\left[ (u_R^{\eps}, v_R^{\eps}), (u_L^{\eps}, v_L^{\eps}) \right] = v_R^{\eps} - v_L^{\eps} = 0.
\end{equation}
Similarly, to conserve the mass of the auxiliary variable at the coupling node in the relaxation limit, we impose as second coupling condition \eqref{eq:kirchhoff} in  \eqref{eq:relsyst11vl} and \eqref{eq:relsyst11vr} and get the condition
  \begin{equation}
\Psi_2\left[(u_R^{\eps}, v_R^{\eps}), (u_L^{\eps}, v_L^{\eps})\right] = \lambda_1^2 \, u_R^{\eps} - \lambda_2^2 u_L^{\eps} = 0. 
\end{equation}
\end{subequations}
Combining \eqref{eq:relaxationwaves} and \eqref{eq:relsyst11couplingconditions} a regular linear system is obtained, which determines \eqref{eq:riemannsolverrelsyst11}. In explicit form the coupling data is given by 
\begin{subequations}\label{eq:relaxationcouplingdata}
  \begin{align}
    u^\eps_R &= \frac{\lambda_2}{\lambda_1} \, \frac{ \lambda_1 u^{\eps-}_0 + \lambda_2 u^{\eps+}_0  +  v^{\eps-}_0 - v^{\eps+}_0}{\lambda_1 + \lambda_2},\quad
    u^\eps_L = \frac{\lambda_1}{\lambda_2} \frac{\lambda_1 u^{\eps-}_0 + \lambda_2 u^{\eps+}_0 +  v^{\eps-}_0 -  v^{\eps+}_0}{\lambda_1 + \lambda_2}, \label{eq:relaxationcouplingdatau}\\
    v^\eps_R &= v^\eps_L = \frac{\lambda_1 v^{\eps-}_0 + \lambda_2 v^{\eps+}_0 + \lambda_1^2 u^{\eps-}_0 - \lambda_2^2 u^{\eps+}_0}{\lambda_1 + \lambda_2}. \label{eq:relaxationcouplingdatav}
  \end{align}
\end{subequations}

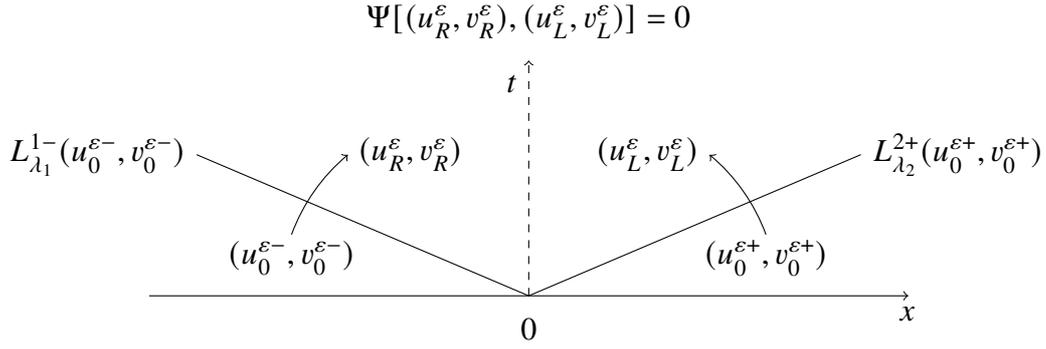
\begin{figure}
  \centering
  \begin{tikzpicture}[x=\linewidth/25,y=\linewidth/25]
\draw[->, thin] (-8,0) -- (8,0) node[below] {$x$};
\node[label=below:{$0$}] at (0, 0){};
\draw[->, thin, dashed] (0, 0) -- (0, 5) node[label=above:{$\Psi[(u_R^\eps, v_R^\eps),  (u_L^\eps, v_L^\eps)]=0$}] {};
\node[left] at (0, 4.5){$t$};
\node at (-5, .8){$(u_0^{\eps-}, v_0^{\eps-})$};
\draw[->] (-5, 1.3) arc (160:130:4);
\node at (-2.5, 3){$(u_R^{\eps}, v_R^{\eps})$};
\node at (5, .8){$(u_0^{\eps+}, v_0^{\eps+})$};
\draw[->] (5, 1.3) arc (20:50:4);
\node at (2.5, 3){$(u_L^{\eps}, v_L^{\eps})$};
\draw[-] (0, 0) -- (-7, 3) node[left] {$L^{1-}_{\lambda_1}(u_0^{\eps-}, v_0^{\eps-})$};
\draw[-] (0, 0) -- (7, 3) node[right] {$L^{2+}_{\lambda_2}(u_0^{\eps+}, v_0^{\eps+})$};
\end{tikzpicture}
  \caption{Wave structure of the coupled relaxation system on the 1-to-1 network in the $x$-$t$-plane. Incoming traces $(u_0^{\eps-}, v_0^{\eps-})$ are connected to the coupling data $(u_R^{\eps}, v_R^{\eps})$ by the $L^{1-}_{\lambda_1}$ backward Lax curve and outgoing traces $(u_0^{\eps+}, v_0^{\eps+})$ are connected to the coupling data $(u_L^{\eps}, v_L^{\eps})$ by the $L^{2+}_{\lambda_2}$ forward Lax curve. Coupling data on the incoming and outgoing edges are related by the coupling condition $\Psi$.}\label{fig:wavestructure}
\end{figure}

\begin{Rmk}\label{rem:equalspeeds}
  Defining $\lambda = \max \{ \lambda_1, \lambda_2\}$ system \eqref{eq:relsyst11} can be rewritten using relaxation speed $\lambda$ in both \eqref{eq:relsyst11vl} and \eqref{eq:relsyst11vr}. In this case we obtain the simplified coupling data
\begin{subequations}\label{eq:boundaryvalsimple}
  \begin{align}
    u^\eps_R &= u^\eps_L = \frac{u^{\eps-}_0 + u^{\eps+}_0}{2} + \frac{v^{\eps-}_0 - v^{\eps+}_0}{2 \lambda}, \\
    v^\eps_R &= v^\eps_L = \frac{v^{\eps-}_0 + v^{\eps+}_0}{2} + \frac{\lambda}{2}\left( u^{\eps-}_0 - u^{\eps+}_0\right).
  \end{align}
\end{subequations}
A drawback of this adjustment is the increase of numerical diffusion in the schemes discussed in Section~\ref{sec:scheme}.
\end{Rmk}

%
\subsection{Central Scheme for 1-to-1 Networks}\label{sec:scheme}
In this section we derive the central scheme for 1-to-1 networks of scalar conservation laws. Therefore we will start in Sections \ref{sec:semidiscrete} and \ref{sec:relscheme11} from a semi-discretization of the coupled relaxation system making use of the derived coupling data from Section \ref{sec:relsyst11}. A time discretization is introduced in Section \ref{sec:imex} for which the relaxation limit is considered in Section \ref{sec:limit}. In Section \ref{sec:highorder} we introduce a second order scheme.

\subsubsection{Semi-discrete Scheme}\label{sec:semidiscrete}
We introduce a uniform grid on the real line by fixing $\Delta x > 0$ and defining the mesh points $x_{j-1/2}= j \,\Delta x $ for any $j \in \Z$. We denote the approximate average of any scalar quantity $q$ in the cell $I_j\coloneq[x_{j}, x_{j+1}]$, which still depends on the time variable, by $q_j$.

We  obtain a scheme for the relaxation system in characteristic variables \eqref{eq:characteristics} by applying the first order upwind discretization, see e.g., \cite{levequeFiniteVolumeMethods2002}. Hereby we obtain by the signs of the eigenvalues for any $j\in \Z$
\begin{subequations}\label{eq:upwindcharvar}
  \begin{align}
            \partial_t w^{\eps-}_j - \frac{\lambda}{\Delta x} (w^{\eps-}_{j+1} - w^{\eps-}_{j}) &= \frac{1}{\eps}\left( f\left(  \frac{w^{\eps+}_j-w^{\eps-}_j}{\lambda}\right)  - w^{\eps+}_j-w^{\eps-}_j\right),\\
    \partial_t w^{\eps+}_j + \frac{\lambda}{\Delta x} (w^{\eps+}_j - w^{\eps+}_{j-1}) &= \frac{1}{\eps}\left( f\left(  \frac{w^{\eps+}_j-w^{\eps-}_j}{\lambda}\right)  - w^{\eps+}_j-w^{\eps-}_j\right).
  \end{align}
\end{subequations}
To derive \eqref{eq:upwindcharvar}, we have additionally applied a midpoint discretization to the flux function to approximate $f(q_j) \approx \int_{I_j}f(q(t, x)) \, dx$. Transforming back to the original variables $u_\eps = (w^{\eps+} - w^{\eps-})/\lambda$ and $v_\eps = w^{\eps+} + w^{\eps-}$ we end up with a semi-discrete scheme for \eqref{eq:relsyst}, that reads for any $j\in\Z$
\begin{subequations}\label{eq:relaxationupwindscheme}
  \begin{align}
    \partial_t u^\eps_j + \frac{v^\eps_{j+1} - v^\eps_{j-1}}{2 \Delta x} - \frac{\lambda}{2 \Delta x} \left(  u^\eps_{j+1} -2 u^\eps_j + u^\eps_{j-1} \right) &= 0, \\
    \partial_t v^\eps_j + \frac{\lambda^2}{2 \Delta x} \left(  u^\eps_{j+1} - u^\eps_{j-1} \right) - \frac{\lambda}{2 \Delta x}  \left(  v^\eps_{j+1} -2 v^\eps_j + v^\eps_{j-1} \right) &= \frac{1}{\eps} \left( f(u^\eps_j) - v^\eps_j \right).
  \end{align}
\end{subequations}

\subsubsection{Coupled scheme}\label{sec:relscheme11}
\begin{figure}
  \begin{tikzpicture}[x=\linewidth/25,y=\linewidth/25]
\draw[->, thin] (-12,0) -- (12,0) node[right] {$x$};
\node[label=below:$x_{-5/2}$] at (-10, 0){$|$};
\node[label=below:$I_{-2}$] at (-7.5, 0){};
\node[label=below:$x_{-3/2}$] at (-5, 0){$|$};
\node[label=below:$I_{-1}$] at (-2.5, 0){};
\node[label=below:{$x_{-1/2}=0$}] at (0, 0){$|$};
\node[label=below:$I_0$] at (2.5, 0){};
\node[label=below:$x_{1/2}$] at (5, 0){$|$};
\node[label=below:$I_1$] at (7.5, 0){};
\node[label=below:$x_{3/2}$] at (10, 0){$|$};
\draw[-, thin, dashed] (0, 0) -- (0, 4) node[label=above:{$\Psi[(u_0^{\eps-}, v_0^{\eps-}), (u_0^{\eps+}, v_0^{\eps+})]=0$}] {};
\coordinate (A) at (-5,1);
\node[label=above:{$\begin{aligned}
                     \ddt u^\eps + \ddx v^\eps &= 0\\
                     \ddt v^\eps + \lambda_1^2 \, \ddx u^\eps &= \frac 1 \eps (f_1(u^\eps) - v^\eps)
                     \end{aligned}$}] at (-6, 1) {};
\node[label=above:{$\begin{aligned}
                     \ddt u^\eps + \ddx v^\eps &= 0\\
                     \ddt v^\eps + \lambda_2^2 \, \ddx u^\eps &= \frac 1 \eps (f_2(u^\eps) - v^\eps)
                     \end{aligned}$}] at (6, 1) {};
\end{tikzpicture}
  \caption{The relaxation system in the 1-to-1 coupling case on the discretized real line.}\label{fig:one-to-one-num}
\end{figure}

We consider a discretization of the coupled relaxation system \eqref{eq:relsyst11}. Figure \ref{fig:one-to-one-num} shows an illustration of the setting and the space discretization. We denote by $\dots, u_{-2}^\eps, u_{-1}^\eps$ and $\dots, v_{-2}^\eps, v_{-1}^\eps$ discretizations of $u^{\eps}$ and $v^{\eps}$  left from the coupling node and by $u_0^\eps, u_1^\eps,\dots$ and $v_0^\eps, v_1^\eps,\dots$ discretizations of $u^{\eps}$ and $v^{\eps}$ right from the coupling node, respectively. Now we apply scheme~\eqref{eq:relaxationupwindscheme} and use the coupling data derived in Section \ref{sec:relsyst11} as ghost cell averages beyond the coupling node when approaching $x=0$ from the left and from the right. Thus we obtain
\begin{subequations}\label{eq:relaxationupwindschemeleft}
  \begin{align}
        \ddt u^\eps_{-1} + \frac{v^\eps_R- v^\eps_{-2}}{2 \Delta x} - \frac{\lambda_1}{2 \Delta x} \left(  u^\eps_R -2 u^\eps_{-1} + u^\eps_{-2} \right) &= 0, \\
    \ddt v^\eps_{-1} + \frac{\lambda_1^2}{2 \Delta x} \left(  u^\eps_R - u^\eps_{-2} \right) - \frac{\lambda_1}{2 \Delta x}  \left(  v^\eps_R -2 v^\eps_{-1} + v^\eps_{-2} \right) &= \frac{1}{\eps} \left( f_1(u^\eps_{-1}) - v^\eps_{-1} \right)
  \end{align}
\end{subequations}
for the evolution of the cell average left from the coupling node and
\begin{subequations}\label{eq:relaxationupwindschemeright}
  \begin{align}
    \ddt u^\eps_0 + \frac{v^\eps_{1} - v^\eps_L}{2 \Delta x} - \frac{\lambda_2}{2 \Delta x} \left(  u^\eps_{1} -2 u^\eps_0 + u^\eps_{L} \right) &= 0, \\
    \ddt v^\eps_0 + \frac{\lambda_2^2}{2 \Delta x} \left(  u^\eps_{1} - u^\eps_{L} \right) - \frac{\lambda_2}{2 \Delta x}  \left(  v^\eps_{1} -2 v^\eps_0 + v^\eps_{L} \right) &= \frac{1}{\eps} \left( f_2(u^\eps_{0}) - v^\eps_0 \right)                                                                                                                                                                         
    \end{align}
  \end{subequations}
 for the evolution of the cell average right from the coupling node. Clearly, \eqref{eq:relaxationupwindschemeleft} and \eqref{eq:relaxationupwindschemeright} can be complemented to a scheme over the full real line by additionally considering \eqref{eq:relaxationupwindscheme} for $j\in\Z \setminus \{-1, 0 \}$  with $\lambda$ and $f$ substituted by $\lambda_1$ and $f_1$ for negative $j$ and $\lambda_2$ and $f_2$ for positive $j$.
  
 In the discretized setting traces are obtained from the cell averages next to the coupling node. Thus we have $u_0^{\eps-}=u_{-1}^\eps$, $u_0^{\eps+}=u_{0}^\eps$, $v_0^{\eps-}=v_{-1}^\eps$ and $v_0^{\eps+}=v_{0}^\eps$. Substituting now the coupling data~\eqref{eq:relaxationcouplingdata} into \eqref{eq:relaxationupwindschemeleft} and \eqref{eq:relaxationupwindschemeright} we obtain in case of $\lambda=\lambda_1=\lambda_2$ (clf. Remark \ref{rem:equalspeeds}) left from the coupling node
\begin{subequations}\label{eq:relaxationupwindschemeleftonelambda}
  \begin{align}
    \ddt u^\eps_{-1} + \frac{v^\eps_0 - v^\eps_{-2}}{2 \Delta x} - \frac{\lambda}{2 \Delta x} \left(  u^\eps_0 -2 u^\eps_{-1} + u^\eps_{-2} \right) &= 0, \\
    \ddt v^\eps_{-1} + \frac{\lambda^2}{\Delta x} \left( u^\eps_0 - u^\eps_{-2} \right) - \frac{\lambda}{2 \Delta x}  \left( v^\eps_0 -2 v^\eps_{-1} + v^\eps_{-2} \right) &= \frac{1}{\eps} \left( f_1(u^\eps_{ -1}) - v^\eps_{-1} \right)
  \end{align}
\end{subequations}
and right from the coupling node
\begin{subequations}\label{eq:relaxationupwindschemerightonelambda}
  \begin{align}
    \ddt u^\eps_0 + \frac{v^\eps_{1} - v^\eps_{-1}}{2 \Delta x} - \frac{\lambda}{2 \Delta x} \left(  u^\eps_{1} -2 u^\eps_0 + u^\eps_{-1} \right) &= 0, \\
    \ddt v^\eps_0 + \frac{\lambda^2}{2 \Delta x} \left(  u^\eps_{1} - u^\eps_{-1} \right) - \frac{\lambda}{2 \Delta x}  \left(  v^\eps_{1} -2 v^\eps_0 + v^\eps_{-1} \right) &= \frac{1}{\eps}  \left( f_2(u^\eps_{0}) - v^\eps_{0} \right).                                                                                                                    
    \end{align}
  \end{subequations}
  The corresponding evolution formulas in case of different relaxation speeds $\lambda_1\neq \lambda_2$ are given in Appendix~\ref{app:speeds}. The following consistency result follows from \eqref{eq:relaxationupwindschemeleftonelambda} and \eqref{eq:relaxationupwindschemerightonelambda}.
  
    \begin{Prop}[Consistency of the semi-discrete scheme]\label{prop:consistency}
    We assume $f=f_1=f_2$ as well as $\lambda_1=\lambda_2=\lambda$. Then the semi-discrete scheme for the 1-to-1 network system \eqref{eq:relsyst11}, which consists of \eqref{eq:relaxationupwindscheme} for $j \in \Z \setminus \{-1, 0\}$ supplemented by \eqref{eq:relaxationupwindschemeleft}, \eqref{eq:relaxationupwindschemeright} and coupling data \eqref{eq:relaxationcouplingdata}, is identical to the semi-discrete scheme for the uncoupled relaxation system \eqref{eq:relsyst} given by \eqref{eq:relaxationupwindscheme} for $j \in \Z$.
  \end{Prop}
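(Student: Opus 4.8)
The plan is to verify the claimed identity cell by cell and to reduce the whole statement to a check at the two cells $I_{-1}$ and $I_0$ adjacent to the node. Away from the node, i.e.\ for $j\in\Z\setminus\{-1,0\}$, the coupled scheme is \emph{by definition} \eqref{eq:relaxationupwindscheme} with $(\lambda,f)$ taken as $(\lambda_1,f_1)$ for $j\le-2$ and as $(\lambda_2,f_2)$ for $j\ge1$; under the hypotheses $f_1=f_2=f$ and $\lambda_1=\lambda_2=\lambda$ this is literally \eqref{eq:relaxationupwindscheme}, so there is nothing to prove there. Moreover, the relaxation source terms in \eqref{eq:relaxationupwindschemeleft} and \eqref{eq:relaxationupwindschemeright} involve only the local averages $u^\eps_{-1},v^\eps_{-1}$ resp.\ $u^\eps_0,v^\eps_0$ and no coupling data, so they already coincide with those of \eqref{eq:relaxationupwindscheme}. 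Hence the entire content of the proposition is that, after inserting the coupling data, the transport terms of the two junction updates coincide with those of \eqref{eq:relaxationupwindscheme} evaluated at $j=-1$ and $j=0$.

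The concrete route is the direct substitution already prepared in the text. Specializing \eqref{eq:relaxationcouplingdata} to $\lambda_1=\lambda_2=\lambda$ gives the simplified data \eqref{eq:boundaryvalsimple} of Remark~\ref{rem:equalspeeds}, and the discrete traces are $u^{\eps-}_0=u^\eps_{-1}$, $u^{\eps+}_0=u^\eps_0$, $v^{\eps-}_0=v^\eps_{-1}$, $v^{\eps+}_0=v^\eps_0$. Inserting $(u^\eps_R,v^\eps_R)$ into \eqref{eq:relaxationupwindschemeleft} and collecting terms, the mixed contributions of the two ghost quantities cancel in pairs, so that the ghost state contributes exactly the neighbouring averages $u^\eps_0,v^\eps_0$ with the coefficients appearing in \eqref{eq:relaxationupwindscheme} at $j=-1$; the symmetric computation with $(u^\eps_L,v^\eps_L)$ in \eqref{eq:relaxationupwindschemeright} reproduces \eqref{eq:relaxationupwindscheme} at $j=0$. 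This is precisely the algebra recorded in \eqref{eq:relaxationupwindschemeleftonelambda}--\eqref{eq:relaxationupwindschemerightonelambda}, and comparing those displays with \eqref{eq:relaxationupwindscheme} at $j=-1,0$ closes the argument.

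The cleanest way to \emph{see} the cancellation, and the organizing idea I would actually rely on to avoid coefficient and sign slips, is to argue in the characteristic variables $w^{\eps-},w^{\eps+}$ of \eqref{eq:characteristics}. The upwind scheme transports $w^{\eps-}$ (speed $-\lambda$) using the right neighbour and $w^{\eps+}$ (speed $+\lambda$) using the left neighbour, so at $j=-1$ only the $w^{\eps-}$ flux reaches across the node and at $j=0$ only the $w^{\eps+}$ flux does. The two Lax-curve conditions \eqref{eq:relaxationwaves} keep, respectively, $w^{\eps+}$ constant along $L^{1-}_\lambda$ and $w^{\eps-}$ constant along $L^{2+}_\lambda$; combined with the equal-speed coupling conditions $u^\eps_R=u^\eps_L$, $v^\eps_R=v^\eps_L$ this yields
\[
  w^{\eps-}(u^\eps_R,v^\eps_R)=w^{\eps-}(u^{\eps+}_0,v^{\eps+}_0)=w^{\eps-}_0,\qquad
  w^{\eps+}(u^\eps_L,v^\eps_L)=w^{\eps+}(u^{\eps-}_0,v^{\eps-}_0)=w^{\eps+}_{-1}.
\]
Thus the crossing characteristic supplied by the coupling data equals exactly the characteristic of the genuine neighbour on the relevant side, which is all the uncoupled upwind flux ever uses; the other characteristic at each junction cell is drawn from a same-side neighbour and is therefore untouched by the coupling. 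Transforming back through the fixed linear isomorphism $u^\eps=(w^{\eps+}-w^{\eps-})/\lambda$, $v^\eps=w^{\eps+}+w^{\eps-}$ then gives the identity in the original variables.

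I do not expect any genuine obstacle: the relaxation system is linear and the claim is a finite algebraic identity. The only point requiring care is the bookkeeping of the half-weighted contributions of $u^\eps_0$ and $v^\eps_0$ coming from the two ghost quantities, and it is exactly here that the characteristic formulation pays off, since it replaces the cancellation by the transparent invariance of one Riemann invariant along each Lax curve.
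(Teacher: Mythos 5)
Your proof is correct, and its skeleton coincides with the paper's: away from the node and in the relaxation source terms nothing needs checking, so everything reduces to substituting the equal-speed coupling data \eqref{eq:boundaryvalsimple} into \eqref{eq:relaxationupwindschemeleft}--\eqref{eq:relaxationupwindschemeright} and comparing with \eqref{eq:relaxationupwindscheme} at $j=-1,0$; the paper's entire argument consists of recording exactly that substitution in the displays \eqref{eq:relaxationupwindschemeleftonelambda}--\eqref{eq:relaxationupwindschemerightonelambda}. What you add beyond the paper is the characteristic-variable mechanism for the cancellation: since $w^{\eps+}$ is invariant along $L^{1-}_{\lambda}$ and $w^{\eps-}$ along $L^{2+}_{\lambda}$, the equal-speed identities $u^\eps_R=u^\eps_L$, $v^\eps_R=v^\eps_L$ force the only characteristic that the upwind stencil actually pulls across the node ($w^{\eps-}$ at $j=-1$, $w^{\eps+}$ at $j=0$) to coincide with that of the genuine neighbouring cell, after which the identity follows from the fixed linear change of variables. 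This replaces the pairwise algebraic cancellations by a one-line invariance statement, and it also buys the protection against coefficient slips that you mention --- usefully so, since the paper's own display \eqref{eq:relaxationupwindschemeleftonelambda} contains a typo: the coefficient of $u^\eps_0-u^\eps_{-2}$ reads $\lambda^2/\Delta x$, whereas matching \eqref{eq:relaxationupwindscheme} at $j=-1$ (and your computation) requires $\lambda^2/(2\Delta x)$; taken literally, the paper's displayed equation would contradict the proposition it is meant to establish.
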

  
  \subsubsection{Fully discrete scheme}\label{sec:imex}
In this section we derive an implicit--explicit scheme and consider to this end a uniform partition of the time line by introducing the time increment $\Delta t>0$ and setting $t^n= n \Delta t$ for all $n \in \N_0$. The approximate average in the cell $I_j$ of any scalar quantity $q$ at time $t^n$ is denoted by $q_j^n$. For $j \in \Z \setminus \{ -1, 0\}$ an implicit-explicit time discretization of \eqref{eq:relaxationupwindscheme} is given by
\begin{subequations}\label{eq:unsplit}
  \begin{align}
    u^{\eps, n+1}_j &= u^{\eps, n}_j - \frac{\Delta t}{2 \Delta x}\left( v^{\eps, n}_{j+1} - v^{\eps, n}_{j-1}\right) + \frac{\lambda_{\alpha(j)} \Delta t}{2 \Delta x} \left( u^{\eps, n}_{j+1} - 2 u^{\eps, n}_{j} + u^{\eps, n}_{j-1}\right) , \label{eq:unsplitu}\\
    v^{\eps, n+1}_j &= v^{\eps, n}_j - \frac{\lambda_{\alpha(j)}^2 \Delta t}{2 \Delta x}\left( u^{\eps, n}_{j+1} - u^{\eps, n}_{j-1}\right) + \frac{\lambda_{\alpha(j)} \Delta t}{2 \Delta x}\left( v^{\eps, n}_{j+1} - 2 v^{\eps, n}_j + v^{\eps, n}_{j-1}\right) \notag \\
    &\quad + \frac 1 \eps \left( f_{\alpha(j)}(u^{\eps, n+1}_j) - v^{\eps, n+1}_j \right),\label{eq:unsplitv}
    \end{align}
  \end{subequations}
   where $\alpha(j) = 1$ for negative $j$ and $\alpha(j) = 2$ for positive $j$.
  This scheme was proposed in \cite{huAsymptoticPreservingSchemesMultiscale2017} and while it handles most terms explicitly the stiff relaxation term is treated implicitly for increased stability. Since at each solution update $u_j^{\eps,n+1}$ can be computed first by the explicit evolution formula \eqref{eq:unsplitu}, it is not necessary to solve a nonlinear system to evaluate \eqref{eq:unsplitv} afterwards.

  We complement this scheme left from the coupling node by
  \begin{subequations}\label{eq:unsplitcoupling}
    \begin{align}
        u^{\eps, n+1}_{-1} &= u^{\eps, n}_{-1} - \frac{\Delta t}{2 \Delta x}\left( v^{\eps, n}_{R} - v^{\eps, n}_{-2}\right) + \frac{\lambda_1 \Delta t}{2 \Delta x} \left( u^{\eps, n}_{R} - 2 u^{\eps, n}_{-1} + u^{\eps, n}_{-2}\right), \\
      v^{\eps, n+1}_{-1} &= v^{\eps, n}_{-1} - \frac{\lambda_1^2 \Delta t}{2 \Delta x}\left( u^{\eps, n}_{R} - u^{\eps, n}_{-2}\right) + \frac{\lambda_1 \Delta t}{2 \Delta x}\left( v^{\eps, n}_{R} - 2 v^{\eps, n}_{-1} + v^{\eps, n}_{-2}\right) + \frac 1 \eps \left( f_1(u^{\eps, n+1}_{-1}) - v^{\eps, n+1}_{-1} \right),\label{eq:unsplitvl}
    \end{align}
    and right from the coupling node by
    \begin{align}
            u^{\eps, n+1}_{0} &= u^{\eps, n}_0 - \frac{\Delta t}{2 \Delta x}\left( v^{\eps, n}_{1} - v^{\eps, n}_{L}\right) + \frac{\lambda_2 \Delta t}{2 \Delta x} \left( u^{\eps, n}_{1} - 2 u^{\eps, n}_{0} + u^{\eps, n}_{L}\right), \\
      v^{\eps, n+1}_{0} &= v^{\eps, n}_0 - \frac{\lambda_2^2 \Delta t}{2 \Delta x}\left( u^{\eps, n}_{1} - u^{\eps, n}_{L}\right) + \frac{\lambda_2 \Delta t}{2 \Delta x}\left( v^{\eps, n}_{1} - 2 v^{\eps, n}_0 + v^{\eps, n}_{L}\right) + \frac 1 \eps \left( f_2(u^{\eps, n+1}_0) - v^{\eps, n+1}_0 \right). \label{eq:unsplitvr}
    \end{align}
  \end{subequations}
  As \eqref{eq:unsplitcoupling} is a time discretization of \eqref{eq:relaxationupwindschemeleft} and \eqref{eq:relaxationupwindschemeright} the coupling data accounting for boundary information in \eqref{eq:unsplitcoupling} is derived from cell averages next to the coupling node by the Riemann solver \eqref{eq:riemannsolverrelsyst11} as
  \begin{equation}
    \mathcal{RS}_\text{rel}(u_{-1}^{\eps,n}, v_{-1}^{\eps,n}, u_0^{\eps,n}, v_0^{\eps,n}) \eqcolon (u_R^{\eps,n}, v_R^{\eps,n}, u_L^{\eps,n}, u_L^{\eps,n}).
  \end{equation}
  \begin{Rmk} In \cite{huAsymptoticPreservingSchemesMultiscale2017} the authors further consider an alternative two-stage time discretization, which treats the relaxation implicitly in a similar fashion as \eqref{eq:unsplit}. The limit scheme derived in Section \ref{sec:limit} is also obtained when this alternative time discretization is used.
  \end{Rmk}
  
  \subsubsection{Relaxation Limit}\label{sec:limit}
  In this section we derive the relaxation limit of the scheme for the 1-to-1 relaxation system \eqref{eq:relsyst11} introduced in Section \ref{sec:imex} given by \eqref{eq:unsplit} and \eqref{eq:unsplitcoupling}. Our aim is to obtain in this way a scheme for scalar conservation laws in the 1-to-1 network case \eqref{eq:scalarconservation11}.

  For the limit process an asymptotic expansion at the relaxation state of the state variables can be considered. As in the scheme the relaxation time appears only in the discretized balance term, the following procedure is equivalent: we keep $\Delta x$ and $\Delta t $ fixed and assume that the magnitude of these quantities as well as of the occurring cell averages are independent of the relaxation time and then consider the limit $\eps\rightarrow 0$ \cite{huAsymptoticPreservingSchemesMultiscale2017}. From \eqref{eq:unsplitv}, \eqref{eq:unsplitvl} and \eqref{eq:unsplitvr} we get
  \begin{equation}\label{eq:vjlimit}
    v_j^{n+1} = f_1(u_j^{n+1}) \quad\text{ if }j \leq -1 \quad\text{and}\quad  v_j^{n+1} = f_2(u_j^{n+1}) \quad \text{ if }0 \leq j 
  \end{equation}
  for any $n\in \N_0$, where we have used the limit notations $u_j^{n+1} = \lim_{\eps\rightarrow 0} u_j^{\eps, n+1}$ and $v_j^n = \lim_{\eps\rightarrow 0} v_j^{\eps, n+1}$. Consequently, the evolution formulas for the auxiliary variable can be discarded and we obtain the limit scheme
  \begin{subequations}\label{eq:limitscheme}
    \begin{align}
      u^{n+1}_j &= u^{n}_j - \frac{\Delta t}{2 \Delta x}\left( f_{\alpha(j)}(u^{n}_{j+1}) - f_{\alpha(j)}(u^{n}_{j-1})\right) + \frac{\lambda_{\alpha(j)} \Delta t}{2 \Delta x} \left( u^{n}_{j+1} - 2 u^{n}_{j} + u^{n}_{j-1}\right),\label{eq:limitschemeg}\\
      u^{n+1}_{-1} &= u^{n}_{-1} - \frac{\Delta t}{2 \Delta x}\left( v^{n}_{R} - f_1(u^{n}_{-2})\right) + \frac{\lambda_1 \Delta t}{2 \Delta x} \left( u^{n}_{R} - 2 u^{n}_{-1} + u^{n}_{-2}\right), \label{eq:limitschemel}\\
         u^{n+1}_{0} &= u^{n}_{0} - \frac{\Delta t}{2 \Delta x}\left( f_2(u^n_{1}) - v^{n}_{L}\right) + \frac{\lambda_2 \Delta t}{2 \Delta x} \left( u^{n}_{1} - 2 u^{n}_{0} + u^{n}_{L}\right) \label{eq:limitschemer}
      \end{align}
    \end{subequations}
    for $j\in \Z \setminus\{-1, 0\}$ in \eqref{eq:limitschemeg} and with $\alpha(j) = 1$ for negative $j$ and $\alpha(j) = 2$ for positive $j$. We emphasize that in general $v^n_R \neq f_1(u^n_R)$ and $v^n_L\neq f_2(u^n_L)$ in \eqref{eq:limitschemel} and \eqref{eq:limitschemer} and that, although the scheme approximates the single scalar quantity $u$, coupling data of the auxiliary variable is required for its evaluation. We further note that the limit scheme is fully explicit. Coupling data in \eqref{eq:limitscheme} is determined by the Riemann solver \eqref{eq:riemannsolverrelsyst11} as
  \begin{equation}\label{eq:limitschemers}
    \mathcal{RS}_\text{rel}(u_{-1}^{n}, f_1(u_{-1}^{n}), u_0^{n}, f_2(u_0^{n})) \eqcolon (u_R^{n}, v_R^{n}, u_L^{n}, v_L^{n}).
  \end{equation}
  
If we assume $\lambda=\lambda_1=\lambda_2$ and substitute the coupling data \eqref{eq:relaxationcouplingdata} derived in Section \ref{sec:relsyst11} taking into account \eqref{eq:limitschemers}, we obtain
\begin{subequations}\label{eq:limitschemesub}
  \begin{align}
      u^{n+1}_{-1} &= u^{n}_{-1} - \frac{\Delta t}{2 \Delta x}\left( f_2(u^n_0) - f_1(u^{n}_{-2})\right) + \frac{\lambda_1 \Delta t}{2 \Delta x} \left( u^{n}_0 - 2 u^{n}_{-1} + u^{n}_{-2}\right), \label{eq:limitschemesubl}\\
         u^{n+1}_{0} &= u^{n}_{0} - \frac{\Delta t}{2 \Delta x}\left( f_2(u^n_{1}) - f_1(u^{n}_{-1})\right) + \frac{\lambda_2 \Delta t}{2 \Delta x} \left( u^{n}_{1} - 2 u^{n}_{0} + u^{n}_{-1}\right), \label{eq:limitschemesubr}                                                                                                              
    \end{align}
  \end{subequations}
  which replace \eqref{eq:limitschemel} and \eqref{eq:limitschemer} in the scheme \eqref{eq:limitscheme}. This makes evident that the limit scheme can be written in the conservative form
  \begin{subequations}\label{eq:limitschemeconservative}
\begin{equation}\label{eq:conservativeform}
  u_j^{n+1} = u_j^n - \frac{\Delta t }{\Delta x} \left( F_{j+1/2}^n -  F_{j-1/2}^n\right) \quad \text{for all }j \in \Z
\end{equation}
using numerical fluxes that depend on the two cell averages next to the cell interface $x_j$ and read
\begin{equation}
   F_{j-1/2}^n = \begin{cases}
                 \frac 1 2 \, (f_1(u_{j-1}^n) + f_1(u_{j}^n))  - \frac \lambda 2 (u_j^n-u_{j-1}^n)  &\text{if }j<0, \\[5pt]
                 \frac 1 2 \, ( f_1(u_{-1}^n) + f_2(u_{0}^n))  - \frac \lambda 2 (u_0^n-u_{-1}^n) &\text{if }j=0, \\[5pt]
                 \frac 1 2 \, ( f_2(u_{j-1}^n) + f_2(u_{j}^n))  - \frac \lambda 2 (u_j^n-u_{j-1}^n) &\text{if }j>0. 
               \end{cases}
\end{equation}
\end{subequations}
We provide evolution formulas corresponding to \eqref{eq:limitschemesub} and \eqref{eq:limitschemeconservative} for differing relaxation speeds in Appendix \ref{app:speeds}. In case $f_1=f_2$ scheme \eqref{eq:limitschemeconservative} is the first order relaxed scheme from \cite{jinRelaxationScheme1995}, which is identical to the Lax--Friedrich scheme if we additionally assume $\lambda=\frac{\Delta x}{\Delta t}$. Thus the consistency property from Proposition \ref{prop:consistency} transfers to the limit scheme as follows. 
\begin{Prop}[Consistency of the limit scheme]\label{prop:consistencylimit}
  Let $f_1=f_2=f$ and $\lambda_1=\lambda_2=\lambda$. Then the limit scheme \eqref{eq:limitscheme} using coupling data \eqref{eq:limitschemers}, \eqref{eq:relaxationcouplingdata} is identical to the first order relaxed scheme (see \cite{jinRelaxationScheme1995}) for the scalar conservation law.
\end{Prop}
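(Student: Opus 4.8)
The plan is to leverage that, once $f_1=f_2=f$ and $\lambda_1=\lambda_2=\lambda$ are imposed, the interior update \eqref{eq:limitschemeg} is \emph{verbatim} the first order relaxed scheme of \cite{jinRelaxationScheme1995} applied cell-by-cell, since for every $j\in\Z\setminus\{-1,0\}$ the coefficients $\alpha(j)$ now select the same flux $f$ and the same speed $\lambda$. Hence the whole assertion collapses to a single point: that the two exceptional updates at the cells adjacent to the node, namely \eqref{eq:limitschemel} at $j=-1$ and \eqref{eq:limitschemer} at $j=0$, also reduce to this common formula. Equivalently, one checks that the three cases of the numerical flux $F^n_{j-1/2}$ in the conservative form \eqref{eq:limitschemeconservative} coincide. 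I would carry out the cell-based verification first and then read off the conservative form.

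The only combinations of the coupling data that enter the node updates are $\lambda u^n_R-v^n_R$ in \eqref{eq:limitschemel} and $\lambda u^n_L+v^n_L$ in \eqref{eq:limitschemer}. The key step is therefore to establish
\begin{equation*}
  \lambda u^n_R - v^n_R = \lambda u^n_0 - f(u^n_0), \qquad \lambda u^n_L + v^n_L = \lambda u^n_{-1} + f(u^n_{-1}).
\end{equation*}
I would prove these transparently from the Lax-curve structure \eqref{eq:laxcurves} rather than by grinding through the explicit data, using that the coupling data of \eqref{eq:limitschemers} lies on curves emanating from the traces $(u^n_{-1},f(u^n_{-1}))$ and $(u^n_0,f(u^n_0))$. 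Along the forward curve $L^{2+}_\lambda$ the quantity $\lambda u-v$ is invariant, so $\lambda u^n_L-v^n_L=\lambda u^n_0-f(u^n_0)$; along the backward curve $L^{1-}_\lambda$ the quantity $\lambda u+v$ is invariant, so $\lambda u^n_R+v^n_R=\lambda u^n_{-1}+f(u^n_{-1})$. The coupling conditions \eqref{eq:relsyst11couplingconditions}, which for equal speeds read $u^n_R=u^n_L$ and $v^n_R=v^n_L$, then transfer each invariant onto the partner state and yield the two displayed identities at once.

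Substituting these identities into the node updates replaces $v^n_R$ and $u^n_R$ in \eqref{eq:limitschemel} exactly by the $f(u^n_0)$ and $u^n_0$ needed to make it read as \eqref{eq:limitschemeg} at $j=-1$, and likewise turns \eqref{eq:limitschemer} into \eqref{eq:limitschemeg} at $j=0$; this is precisely the content already recorded in \eqref{eq:limitschemesub}. Consequently every cell $j\in\Z$ obeys the single update \eqref{eq:limitschemeg} with common flux $f$ and speed $\lambda$, i.e. the conservative scheme \eqref{eq:limitschemeconservative} whose three flux cases have merged into the single expression $F^n_{j-1/2}=\tfrac12\bigl(f(u^n_{j-1})+f(u^n_j)\bigr)-\tfrac\lambda2\bigl(u^n_j-u^n_{j-1}\bigr)$, which is the first order relaxed scheme of \cite{jinRelaxationScheme1995}. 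This is the exact limit-level counterpart of Proposition \ref{prop:consistency}.

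The computation is elementary, so there is no serious analytical obstacle; the only place demanding care is the bookkeeping. One must pair the correct invariant with the correct node cell---$\lambda u-v$ (the $L^{2+}$ invariant) feeding the $j=-1$ update and $\lambda u+v$ (the $L^{1-}$ invariant) feeding the $j=0$ update---and track the signs of the ghost-state flux and diffusion contributions. I would also flag why \emph{both} hypotheses are genuinely needed: the identities collapse the node fluxes to the interior flux only because $u^n_R=u^n_L$ and $v^n_R=v^n_L$ force the two Riemann invariants onto a single value, which requires $\lambda_1=\lambda_2$, whereas $f_1=f_2$ is what merges the remaining two cases of \eqref{eq:limitschemeconservative}; dropping either hypothesis leaves a genuinely node-modified flux.
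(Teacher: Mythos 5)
Your proposal is correct, and its skeleton (reduce everything to the two node cells, show they collapse to the interior update, read off the single conservative flux) matches the paper's; but the mechanism you use for the decisive computation is genuinely different. The paper proves the proposition by brute force: it substitutes the explicit coupling data \eqref{eq:relaxationcouplingdata} (in the equal-speed form \eqref{eq:boundaryvalsimple}) into \eqref{eq:limitschemel} and \eqref{eq:limitschemer}, obtains \eqref{eq:limitschemesub}, and then observes that with $f_1=f_2$ the three cases of the flux in \eqref{eq:limitschemeconservative} merge into the Jin--Xin relaxed flux. You instead avoid the explicit formulas entirely: you isolate the only combinations of coupling data entering the node updates, namely $\lambda u_R^n - v_R^n$ and $\lambda u_L^n + v_L^n$, and evaluate them from the defining properties of the Riemann solver --- invariance of $\lambda u + v$ along $L^{1-}_\lambda$ and of $\lambda u - v$ along $L^{2+}_\lambda$ in \eqref{eq:laxcurves}, transferred across the node by the equal-speed coupling conditions $u_R^n=u_L^n$, $v_R^n=v_L^n$ from \eqref{eq:relsyst11couplingconditions}. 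Your bookkeeping is exactly right (the $L^{2+}$ invariant feeds the $j=-1$ cell and the $L^{1-}$ invariant feeds the $j=0$ cell, which is the correct crossing), and your closing remark on which hypothesis does what is accurate: $\lambda_1=\lambda_2$ is what identifies the two Riemann invariants across the node, while $f_1=f_2$ merges the remaining flux cases, consistent with the genuinely modified node flux \eqref{eq:numfluxgeneralspeeds} in the general case. What your route buys is structural insight --- it explains \emph{why} the identities hold without solving the linear system, and it is the same mechanism that underlies \eqref{eq:nodefluxes} in the network setting. What the paper's explicit substitution buys is the formulas \eqref{eq:limitschemesub} themselves, which are needed anyway to state the scheme for $\lambda_1\neq\lambda_2$ in Appendix~\ref{app:speeds} and to write down the numerical flux in closed form.
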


The relaxed scheme for the uncoupled conservation law was analyzed regarding monotonicity and the result carries over directly to the coupled scheme in the relaxation limit.  

\begin{Prop}[\cite{jinRelaxationScheme1995, crandallMethodFractionalSteps1980}]
 Let $f_1=f_2=f$, $\lambda_1=\lambda_2=\lambda$, $\lambda \Delta t \leq \Delta x$ and assume that the subcharacteristic condition \eqref{eq:subcharacteristic} holds. Then the limit scheme \eqref{eq:limitscheme} using coupling data \eqref{eq:limitschemers}, \eqref{eq:relaxationcouplingdata} is a consistent and monotone scheme (see e.g., \cite{godlewskiNumericalApproximationHyperbolic1996}) and therefore converges to the entropy solution of \eqref{eq:scalarcl} as both $\Delta t$ and $\Delta x$ tend to zero.
\end{Prop}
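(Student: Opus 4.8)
The plan is to reduce the coupled limit scheme to the standard uncoupled relaxed scheme and then invoke the classical convergence theory for monotone conservative schemes. First I would observe that by Proposition~\ref{prop:consistencylimit}, under the hypotheses $f_1=f_2=f$ and $\lambda_1=\lambda_2=\lambda$, the coupled limit scheme \eqref{eq:limitscheme} with coupling data \eqref{eq:limitschemers}, \eqref{eq:relaxationcouplingdata} coincides cell-by-cell with the first-order relaxed scheme of \cite{jinRelaxationScheme1995} for the single scalar conservation law \eqref{eq:scalarcl}. In particular, the coupling node loses its special role: every cell update, including those at $j=-1$ and $j=0$, is of the conservative form \eqref{eq:conservativeform} with the single numerical flux $F_{j-1/2}^n = \tfrac12\bigl(f(u_{j-1}^n)+f(u_j^n)\bigr) - \tfrac{\lambda}{2}(u_j^n-u_{j-1}^n)$. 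This step reduces the assertion to a property of the uncoupled scheme, for which consistency and monotonicity can be checked directly.

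Second, I would verify consistency and monotonicity. Consistency is immediate from $F(u,u)=f(u)$. For monotonicity, writing the update as $u_j^{n+1}=H(u_{j-1}^n,u_j^n,u_{j+1}^n)$ with
\[
  H = u_j - \frac{\Delta t}{2\Delta x}\bigl(f(u_{j+1})-f(u_{j-1})\bigr) + \frac{\lambda\,\Delta t}{2\Delta x}\bigl(u_{j+1}-2u_j+u_{j-1}\bigr),
\]
I would compute the three partial derivatives and show they are non-negative. One finds $\partial H/\partial u_{j-1} = \tfrac{\Delta t}{2\Delta x}\bigl(f'(u_{j-1})+\lambda\bigr)\ge 0$ and $\partial H/\partial u_{j+1} = \tfrac{\Delta t}{2\Delta x}\bigl(\lambda-f'(u_{j+1})\bigr)\ge 0$ by the subcharacteristic condition \eqref{eq:subcharacteristic}, while $\partial H/\partial u_j = 1-\tfrac{\lambda\,\Delta t}{\Delta x}\ge 0$ by the CFL condition $\lambda\,\Delta t\le\Delta x$.

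Third, having established that the scheme is conservative, consistent and monotone, I would invoke the classical convergence theorem for such schemes (Crandall--Majda \cite{crandallMethodFractionalSteps1980}; see also \cite{godlewskiNumericalApproximationHyperbolic1996}), which guarantees convergence in $L^1_{\mathrm{loc}}$ to the unique entropy solution of \eqref{eq:scalarcl} as $\Delta t,\Delta x\to 0$, thereby completing the argument.

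The main point here is not an obstacle but a structural observation: once the reduction via Proposition~\ref{prop:consistencylimit} is in place, nothing genuinely coupling-specific remains, since the entire difficulty has been absorbed into that earlier proposition identifying the coupled and uncoupled schemes. The monotonicity computation is routine and the convergence conclusion is a direct citation, which is exactly why the statement is phrased as a result that \emph{carries over} from the uncoupled setting.
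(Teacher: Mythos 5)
Your proposal is correct and follows essentially the same route as the paper: the paper likewise reduces the coupled scheme to the uncoupled relaxed scheme via Proposition~\ref{prop:consistencylimit} and then lets the known consistency/monotonicity/convergence results of \cite{jinRelaxationScheme1995, crandallMethodFractionalSteps1980} ``carry over'' to the coupled setting. The only difference is that you verify the monotonicity derivatives and the CFL bound explicitly rather than citing them, which is a routine but harmless addition.
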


Due to \eqref{eq:limitschemeconservative} and its generalization in Appendix \ref{app:speeds} we state the following result about the conservation of the total mass $\sum_{j\in\Z} u_j^n$ over time. Conservation at the coupling node for general networks is defined in Section \ref{sec:networkschemes}, whereas it follows directly from the conservative form in the 1-to-1 case.

\begin{Cor}
   The limit scheme for the 1-to-1 network \eqref{eq:scalarconservation11} given by \eqref{eq:limitscheme} and coupling data \eqref{eq:limitschemers}, \eqref{eq:relaxationcouplingdata} is conservative at the coupling node. It is further conservative over the full real line in the case $f_1(0)=f_2(0)$ and over bounded domains if zero-flux boundary conditions are imposed.
\end{Cor}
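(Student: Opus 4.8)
The plan is to read every claim off the conservative form \eqref{eq:limitschemeconservative} (and, for $\lambda_1\neq\lambda_2$, its generalization in Appendix~\ref{app:speeds}), whose decisive feature is that the numerical flux is \emph{single-valued} at each interface, and in particular that the node $x=0$ carries the single flux $F_{-1/2}^n = \tfrac12\bigl(f_1(u_{-1}^n)+f_2(u_0^n)\bigr) - \tfrac\lambda2\bigl(u_0^n-u_{-1}^n\bigr)$. First I would establish conservation at the coupling node. Since $F_{-1/2}^n$ appears as the outgoing flux of the last incoming cell $I_{-1}$ in \eqref{eq:limitschemesubl} and as the incoming flux of the first outgoing cell $I_0$ in \eqref{eq:limitschemesubr}, adding the two update formulas cancels it and yields $u_{-1}^{n+1}+u_0^{n+1} = u_{-1}^n+u_0^n - \tfrac{\Delta t}{\Delta x}\bigl(F_{1/2}^n - F_{-3/2}^n\bigr)$. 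That the mass crossing the junction is booked identically on both edges, with no spurious creation or loss at $x=0$, is exactly the discrete conservation statement at the node, and it holds by the construction of \eqref{eq:limitschemeconservative} from the relaxation coupling data.

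For conservation over the full real line I would sum the conservative form over $j\in\Z$ and use telescoping. Truncating to $-M\leq j\leq M$ gives $\sum_{j=-M}^{M} u_j^{n+1} = \sum_{j=-M}^{M} u_j^n - \tfrac{\Delta t}{\Delta x}\bigl(F_{M+1/2}^n - F_{-M-1/2}^n\bigr)$. Because the initial datum is compactly supported and the explicit scheme has a finite stencil, the numerical solution remains compactly supported at every fixed time level, so $u_j^n\to 0$ as $j\to\pm\infty$; the numerical-viscosity contributions to the boundary fluxes then vanish and the fluxes reduce to the constant far-field values $f_1(0)$ on the incoming side and $f_2(0)$ on the outgoing side. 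Letting $M\to\infty$ leaves $\sum_{j\in\Z} u_j^{n+1} = \sum_{j\in\Z} u_j^n - \tfrac{\Delta t}{\Delta x}\bigl(f_2(0)-f_1(0)\bigr)$, which collapses to exact conservation precisely under the hypothesis $f_1(0)=f_2(0)$.

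The bounded-domain case is the most direct: restricting the sum to the cells of the truncated domain and imposing zero-flux boundary conditions sets the two outermost interface fluxes to zero, so the telescoping sum retains no boundary contribution and $\sum_j u_j^{n+1} = \sum_j u_j^n$ follows at once. The main obstacle is the justification in the full-line case that the fluxes at infinity genuinely reduce to $f_1(0)$ and $f_2(0)$: this rests on the finite-speed-of-propagation/finite-stencil argument guaranteeing compact support at each time level, so that the summed viscosity terms telescope to zero and only the constant far-field fluxes survive; once this is in place the role of $f_1(0)=f_2(0)$ is transparent. For general relaxation speeds the same three arguments apply verbatim to the conservative form recorded in Appendix~\ref{app:speeds}.
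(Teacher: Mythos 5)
Your proposal is correct and follows exactly the paper's route: the paper derives the Corollary directly from the conservative form \eqref{eq:limitschemeconservative} (and its generalization \eqref{eq:numfluxgeneralspeeds} for $\lambda_1\neq\lambda_2$), with node conservation coming from the single-valued flux $F_{-1/2}^n$ and the remaining claims from telescoping with vanishing or cancelling boundary fluxes. You merely spell out the compact-support and far-field details that the paper leaves implicit, which is a faithful completion rather than a different argument.
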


Other schemes for conservation laws in the 1-to-1 network case have been introduced in the literature. In particular, the work \cite{MR2045460} analyzes a similar scheme, which can be written as \eqref{eq:limitscheme} with coupling data $u_R^n=u_0^n$, $v_R^n=f_1(u_0^n)$, $u_L^n=u_{-1}^n$ and $v_R^n=f_2(u_{-1}^n)$. Unlike the method we introduce here, this scheme does not admit a conservative form but it has been proven to converge.

\subsubsection{Second-order Extension}\label{sec:highorder}
In this section we are concerned with a second order scheme for the 1-to-1 scalar network. The scheme is derived by extending the approach in Sections \ref{sec:semidiscrete}--\ref{sec:limit} using piece-wise linear approximations left and right from the coupling node. A similar extension is considered in \cite{jinRelaxationScheme1995} for uncoupled conservation laws.

A second order scheme for system \eqref{eq:relsyst} is derived by applying the MUSCL scheme \cite{vanleerUltimateConservativeDifference1979} to the upwind discretization of the relaxation system in characteristic variables \eqref{eq:upwindcharvar}. This way we obtain the semi-discrete scheme 
\begin{subequations}\label{eq:charvalMUSCL}
  \begin{align}
    \partial_t w^{\eps-}_j - \frac{\lambda}{\Delta x} (w^{\eps-}_{j+1/2} - w^{\eps-}_{j-1/2}) &= \frac{1}{\eps}\left( f\left(  \frac{w^{\eps+}_j-w^{\eps-}_j}{\lambda}\right)  - w^{\eps+}_j-w^{\eps-}_j\right),\\
    \partial_t w^{\eps+}_j + \frac{\lambda}{\Delta x} ( w^{\eps+}_{j+1/2} -  w^{\eps+}_{j-1/2}) &= \frac{1}{\eps}\left( f\left(  \frac{w^{\eps+}_j-w^{\eps-}_j}{\lambda}\right)  - w^{\eps+}_j-w^{\eps-}_j\right)
  \end{align}
\end{subequations}
for all $j\in \Z$, which makes use of interface reconstructions. These are obtained by extrapolation from the upwind direction, i.e.,
\begin{equation}
   w^{\eps-}_{j-1/2} = w^{\eps-}_{j} - \frac{\Delta x}{2} s_{j}^-, \quad  w^{\eps+}_{j-1/2} = w^{\eps+}_{j-1} + \frac{\Delta x}{2} s_{j-1}^+ \quad \text{for all }j\in \Z. 
\end{equation}
The slope in each cell is given by the \emph{monotonized central-difference limiter}
\begin{equation}\label{eq:mclimiter}
  s_j^\mp = \operatorname{minmod}\left( 2\frac{w^{\eps\mp}_{j}  - w^{\eps\mp}_{j-1} }{\Delta x}, ~\frac{w^{\eps\mp}_{j+1}  - w^{\eps\mp}_{j-1} }{2\,\Delta x}, ~ 2 \frac{w^{\eps\mp}_{j+1}  - w^{\eps\mp}_{j} }{\Delta x} \right).
\end{equation}
The selected limiter \eqref{eq:mclimiter} was introduced in \cite{vanleerUltimateConservativeDifference1979} and designed to yield sharp resolutions near discontinuities. In smooth regions it admits the central difference whereas the accuracy at non-sonic critical points is reduced to preserve the monotonicity of the discrete solution.
The \emph{minmod} operator used in its formulation is defined by
\begin{equation}\label{eq:minmod}
\operatorname{minmod}(q_1,\dots,q_n)=
\begin{cases}
\max\{q_1,\dots,q_n\}	&\text{if } q_k<0,~k=1,\dots,n,\\
\min\{q_1,\dots,q_n\}	&\text{if } q_k>0,~k=1,\dots,n, \\
0						&\text{otherwise}.
\end{cases}
\end{equation}

As in the derivation of scheme \eqref{eq:relaxationupwindscheme} we transform \eqref{eq:charvalMUSCL} back to the original variables of the relaxation system, and get
\begin{subequations}\label{eq:relaxationMUSCLupwindscheme}
  \begin{align}
    \partial_t u^\eps_j + \frac{v^\eps_{j+1} - v^\eps_{j-1}}{2 \Delta x} &- \frac{\lambda}{2 \Delta x} \left(  u^\eps_{j+1} -2 u^\eps_j + u^\eps_{j-1} \right) \notag \\
    &- \frac 1 2 \left( s_{j+1}^- - (s_j^+ + s_{j}^-) + s_{j-1}^+\right)= 0, \\
    \partial_t v^\eps_j + \frac{\lambda^2}{2 \Delta x} \left(  u^\eps_{j+1} - u^\eps_{j-1} \right) &- \frac{\lambda}{2 \Delta x}  \left(  v^\eps_{j+1} -2 v^\eps_j + v^\eps_{j-1} \right) \notag \\ &+ \frac \lambda 2 \left( s_{j+1}^-  + s_j^+ - s_{j}^- - s_{j-1}^+\right)= \frac{1}{\eps} \left( f(u^\eps_j) - v^\eps_j \right).
  \end{align}
\end{subequations}
The resulting scheme adds second order extension terms to its first order version, to which we refer in the following by $\mathcal{S}^u_j$ and $\mathcal{S}^v_j$. The included limited slopes \eqref{eq:mclimiter} can be alternatively computed by applying the minmod operator \eqref{eq:minmod} to the terms
\begin{equation}\label{eq:mclimiteroriginal}
 \frac{v_j^\eps - v_{j-1}^\eps  \mp \lambda (u_j^\eps - u_{j-1}^\eps)}{\Delta x},  \frac{v_{j+1}^\eps - v_{j-1}^\eps  \mp \lambda (u_{j+1}^\eps - u_{j-1}^\eps)}{4\,\Delta x}, ~ \frac{v_{j+1}^\eps - v_{j}^\eps  \mp \lambda (u_{j+1}^\eps - u_{j}^\eps)}{\Delta x}.
\end{equation}

Analogously to Section \ref{sec:relscheme11}, we apply scheme \eqref{eq:relaxationMUSCLupwindscheme} to the coupled relaxation system \eqref{eq:relsyst11}, see also Figure \ref{fig:one-to-one-num}. In the coupled scheme (compare \eqref{eq:relaxationupwindschemeleft} and \eqref{eq:relaxationupwindschemeright}) the following second order extensions appear in the evolution formulas of the volumes next to the coupling node
\begin{subequations}
  \begin{align}
    \mathcal{S}^u_{-1} = - \frac 1 2 \left( s_{R}^- - (s_{-1}^+ + s_{-1}^-) + s_{-2}^+\right), \quad  \mathcal{S}^v_{-1}= \frac \lambda 2 \left( s_{R}^-  + s_{-1}^+ - s_{-1}^- - s_{-2}^+\right),\\
    \mathcal{S}^u_{0} = - \frac 1 2 \left( s_{1}^- - (s_{0}^+ + s_{0}^-) + s_{L}^+\right), \quad  \mathcal{S}^v_{0}= \frac \lambda 2 \left( s_{1}^-  + s_{0}^+ - s_{0}^- - s_{L}^+\right).
  \end{align}
\end{subequations}
Our coupling approach gives rise to Dirichlet boundary problems on the edges of the network and does not provide any information about the slope beyond the coupling node. Being derived from the Riemann problem coupling data is assumed spatially constant and thus, it is natural to set 
\begin{equation}
  s_R^- = s_L^+ = 0.
\end{equation}
We note that by setting $s_j^\mp =0$ for any $j\in \Z$ in the MUSCL scheme we locally recover the first order scheme. Higher order approximations beyond the coupling node are not considered in this work but have been achieved by transforming spatial to temporal information using an ADER approach as e.g. in \cite{MR3227276,Banda2016}. 
Moreover, due to \eqref{eq:laxcurves} the coupling data \eqref{eq:relaxationcouplingdata} satisfies
\begin{equation}\label{eq:characteristiccouplingdata}
  w^{\eps+}_{-1} = \frac{v_{-1}^{\eps} + \lambda_1 u_{-1}^{\eps}}{2} = \frac{v_R^{\eps} + \lambda_1 u_R^{\eps}}{2} \eqcolon w^{\eps+}_{R} \quad\text{and}\quad   w^{\eps-}_{0} = \frac{v_0^{\eps} - \lambda_2 u_0^{\eps}}{2} = \frac{v_L^{\eps} - \lambda_2 u_L^{\eps}}{2} \eqcolon w^{\eps-}_{L}.
\end{equation}
In fact \eqref{eq:characteristiccouplingdata} is an equivalent formulation to \eqref{eq:relaxationwaves} in characteristic variables. Consequently, when using coupling data $u_R^{\eps}$, $v_R^{\eps}$, $u_L^{\eps}$ and $v_L^{\eps}$  as ghost cell data at the coupling node when computing the linear reconstructions, we get due to \eqref{eq:mclimiter} and \eqref{eq:minmod}
\begin{equation}
  s_{-1}^+ = s_0^- = 0.
\end{equation}

Following the steps in Sections \ref{sec:imex} and \ref{sec:limit} we discretize in time and take the relaxation limit. We thus obtain the second order limit scheme
  \begin{subequations}\label{eq:limitschemehighorder}
    \begin{align}
      u^{n+1}_j &= u^{n}_j - \frac{\Delta t}{2 \Delta x}\left( f_{\alpha(j)}(u^{n}_{j+1}) - f_{\alpha(j)}(u^{n}_{j-1})\right) + \frac{\lambda_{\alpha(j)} \Delta t}{2 \Delta x} \left( u^{n}_{j+1} - 2 u^{n}_{j} + u^{n}_{j-1}\right) \notag \\ &\quad + \frac {\Delta t} 2 \left(  s_{j+1}^{n-} - (s_j^{n+} + s_{j}^{n-}) + s_{j-1}^{n+} \right) ,\label{eq:limitschemehog}\\
      u^{n+1}_{-1} &= u^{n}_{-1} - \frac{\Delta t}{2 \Delta x}\left( v^{n}_{R} - f_1(u^{n}_{-2})\right) + \frac{\lambda_1 \Delta t}{2 \Delta x} \left( u^{n}_{R} - 2 u^{n}_{-1} + u^{n}_{-2}\right) - \frac {\Delta t} 2 \left(  s_{-1}^{n-} - s_{-2}^{n+} \right), \label{eq:limitschemehol}\\
      u^{n+1}_{0} &= u^{n}_{0} - \frac{\Delta t}{2 \Delta x}\left( f_2(u^n_{1}) - v^{n}_{L}\right) + \frac{\lambda_2 \Delta t}{2 \Delta x} \left( u^{n}_{1} - 2 u^{n}_{0} + u^{n}_{L}\right)  + \frac {\Delta t} 2 \left(  s_{1}^{n-} - s_0^{n+} \right)\label{eq:limitschemehor}
      \end{align}
    \end{subequations}
    for $j\in \Z \setminus\{-1, 0\}$ in \eqref{eq:limitschemeg} and with $\alpha(j) = 1$ for negative $j$ and $\alpha(j) = 2$ for positive $j$. The time discrete slopes $s^{n\mp}_j$ in \eqref{eq:limitschemehighorder} are obtained by applying the limiter \eqref{eq:mclimiter} to the time discrete characteristic variables
\begin{equation}
  w^{n\mp}_j = \frac 1 2 f_{\alpha(j)}(u_j^n) \mp \frac{\lambda_{\alpha(j)}}{2}u_j^n.
\end{equation}
By supplementing the coupling data \eqref{eq:limitschemers}, \eqref{eq:relaxationcouplingdata} we again obtain the conservative form \eqref{eq:conservativeform} using the numerical fluxes
\begin{equation}\label{eq:MUSCLflux}
  F_{j-1/2}^n = \begin{cases}
                 \frac 1 2 \, (f_1(u_{j-1}^n) + f_1(u_{j}^n))  - \frac {\lambda_1} 2 (u_j^n-u_{j-1}^n)  - \frac {\Delta x} 2 (s_j^{n-}-s_{j-1}^{n+}) &\text{if }j<0, \\[5pt]
                 \frac 1 {\lambda_1 + \lambda_2} \, ( \lambda_1 f_1(u_{-1}^n) + \lambda_2 f_2(u_{0}^n))  - \frac 1  {\lambda_1 + \lambda_2} ( \lambda_2^2\, u_0^n- \lambda_1^2\, u_{-1}^n) &\text{if }j=0, \\[5pt]
                 \frac 1 2 \, ( f_2(u_{j-1}^n) + f_2(u_{j}^n))  - \frac {\lambda_2} 2 (u_j^n-u_{j-1}^n)- \frac {\Delta x} 2 (s_j^{n-}-s_{j-1}^{n+}) &\text{if }j>0. 
               \end{cases}
\end{equation}
At the coupling node the numerical flux is identical to the one of the first order scheme given by \eqref{eq:conservativeform} and \eqref{eq:numfluxgeneralspeeds}. In the scheme coupling data  \eqref{eq:limitschemers}, \eqref{eq:relaxationcouplingdata} is required to compute $s_{-1}^{n-}$ and $s_0^{n+}$ according to \eqref{eq:mclimiter}, which takes into account $w^{n-}_{R} \coloneq v_R^n/2 - \lambda_1 u_R^n/2$ and $w^{n+}_{L} \coloneq v_L^n/2 + \lambda_1 u_L^n/2$. Alternatively, these slopes can be set to zero for simplicity.  

Similar to Proposition \ref{prop:consistencylimit} consistency to a second order scheme for \eqref{eq:scalarcl} on the real line is given if $f_1=f_2$, $\lambda_1=\lambda_2$ and when neglecting the numerical flux at the coupling node. For this reason the following result is deduced.

\begin{Prop}\label{prop:TVD} Let $f_1=f_2=f$, $\lambda_1=\lambda_2=\lambda$, $\lambda \Delta t \leq \Delta x/2$ and assume that the subcharacteristic condition \eqref{eq:subcharacteristic} holds. Then the scheme \eqref{eq:limitschemehighorder} using coupling data \eqref{eq:limitschemers}, \eqref{eq:relaxationcouplingdata} and $s_{-1}^{n-}=s_0^{n+}=0$ for all $n\in\N_0$ is consistent, \emph{total variation diminishing} (see e.g., \cite{godlewskiNumericalApproximationHyperbolic1996}) and converges to the weak solution of the conservation law \eqref{eq:scalarcl}.
\end{Prop}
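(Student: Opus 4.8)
The plan is to reduce the scheme, under the stated hypotheses, to the standard second-order relaxed scheme of \cite{jinRelaxationScheme1995} for a single conservation law, modified only by forcing the reconstruction to first order in the two cells adjacent to the node, and then to invoke the TVD theory for that scheme together with a Lax--Wendroff argument.

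First I would verify the reduction. Setting $f_1=f_2=f$ and $\lambda_1=\lambda_2=\lambda$ in the coupling-node flux of \eqref{eq:MUSCLflux} collapses the $\lambda_1^2,\lambda_2^2$ weights and yields $F_{-1/2}^n=\tfrac12(f(u_{-1}^n)+f(u_0^n))-\tfrac\lambda2(u_0^n-u_{-1}^n)$, i.e.\ exactly the first-order relaxation flux at the interface $x=0$, while away from the node the fluxes in \eqref{eq:MUSCLflux} already coincide with the bulk second-order relaxation fluxes. Together with the prescription $s_{-1}^{n-}=s_0^{n+}=0$ this shows that the scheme \eqref{eq:limitschemehighorder} is the standard second-order relaxed MUSCL scheme on $\Z$, modified only by replacing the two limited slopes touching the node by zero. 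Consistency is then immediate, since each numerical flux satisfies $F(u,u)=f(u)$, so the conservative form \eqref{eq:conservativeform} is consistent with \eqref{eq:scalarcl}; this extends Proposition \ref{prop:consistencylimit} to the second-order scheme.

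Next I would establish the TVD property working in the characteristic variables of \eqref{eq:charvalMUSCL}. With equal speeds the transformation $w^{\mp}=\tfrac12 f(u)\mp\tfrac\lambda2 u$ decouples the relaxation limit into two constant-coefficient advection updates, transported at speeds $\mp\lambda$ and discretized by a MUSCL scheme with the monotonized-central limiter \eqref{eq:mclimiter}. Under the CFL restriction $\lambda\Delta t\le\Delta x/2$ each of these advection updates lies in the second-order TVD region (Harten/Sweby theory, see \cite{godlewskiNumericalApproximationHyperbolic1996}), while the subcharacteristic condition \eqref{eq:subcharacteristic} makes $w^{-}$ and $w^{+}$ monotone functions of $u$, so that the total variation of $u$ is controlled by that of the characteristic variables and conversely. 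The decisive point is that forcing $s_{-1}^{n-}=s_0^{n+}=0$ does not spoil this: a vanishing slope is always an admissible output of the limiter \eqref{eq:mclimiter}--\eqref{eq:minmod} and corresponds locally to the monotone first-order flux, so in Harten's incremental form the associated coefficients remain non-negative and bounded by one. Hence the total variation cannot increase and the scheme is total variation diminishing.

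The hard part will be exactly this last verification at the node: one must check that replacing the two MUSCL slopes by zero keeps all incremental coefficients in Harten's admissible range, since the forced-zero slopes are generally not the slopes the bulk limiter would select and the two interfaces at $j=-1,0$ interact through the coupling data \eqref{eq:limitschemers}, \eqref{eq:relaxationcouplingdata}. Once the uniform TV bound is in place, convergence follows in the standard way: the TVD and $L^\infty$ bounds give, via Helly's theorem, a subsequence converging in $L^1_{\mathrm{loc}}$, and since the scheme is in the conservative, consistent form \eqref{eq:conservativeform}, the Lax--Wendroff theorem identifies the limit as a weak solution of \eqref{eq:scalarcl}.
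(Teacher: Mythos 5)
Your proposal is correct and takes essentially the same route as the paper: both reduce the coupled scheme, under $f_1=f_2$, $\lambda_1=\lambda_2$ and the zeroed nodal slopes, to the uncoupled second-order relaxed scheme of Jin--Xin, and both rest on the observation that a vanishing slope is an admissible limiter value --- i.e.\ $\phi_j=0$ satisfies the Sweby-type conditions $0\leq\phi_j\leq 2$ and $0\leq \phi_j\,(w_{j+1}-w_j)/(w_j-w_{j-1})\leq 2$ under which the TVD property holds --- before concluding convergence from consistency of the conservative fluxes via a Lax--Wendroff argument. The only difference is presentational: the paper cites the Jin--Xin TVD theorem and verifies its slope hypotheses for the monotonized central limiter, whereas you unpack that theorem's proof (characteristic decoupling into two limited advection updates, the monotonicity of $w^{\mp}$ in $u$ under the subcharacteristic condition, and Harten's incremental form).
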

\begin{proof}
  From a result in \cite{jinRelaxationScheme1995} follows that the scheme is total variation diminishing. In this work the authors considered a scheme of the form \eqref{eq:limitschemehog} (with uniform $\lambda$ and $f$) for $j\in\Z$ but assumed different slopes. Yet, as the proof only requires that the slopes can be written as
  \begin{equation*}
    s_j = \frac{w_{j+1}-w_j}{\Delta x} \phi_j, \quad \text{where}\quad 0\leq \phi_j \leq 2 \quad \text{and}\quad 0\leq \phi_j \, \frac{w_{j+1} - w_j}{w_j - w_{j-1}}\leq 2,
  \end{equation*}
  which is satisfied by \eqref{eq:mclimiter}, \eqref{eq:minmod} and
  \begin{equation*}
    \phi_j= \operatorname{minmod}\left(2 \frac{w_j-w_{j-1}}{w_{j+1}-w_j},~\frac{w_{j+1}-w_{j-1}}{2(w_{j+1}-w_j)},~ 2\right),
  \end{equation*}
 the result transfers to our second order scheme. We neglected time and sign indices above for clarity. Since the numerical fluxes are further consistent, the convergence result follows due to \cite{levequeFiniteVolumeMethods2002}.
\end{proof}
\section{Multiple Incoming and Outgoing Edges}\label{sec:multipleedges}
In this section, we generalize the schemes introduced in Sections \ref{sec:limit} and \ref{sec:highorder} to scalar conservation laws at nodes with $N^-$ incoming and $N^+
$ outgoing edges. To this end we consider the relaxation system introduced in Section \ref{sec:relaxation} in a network setting to derive admissible coupling data. In Section \ref{sec:fluxdistribution} we then discuss additional conditions about the flux distribution at the junction, which are required for well posedness of coupling data. In Section \ref{sec:networkschemes} we eventually present the central schemes for the network problem \eqref{eq:scalarconservationnetwork}.

\subsection{The Relaxation System on Networks}\label{sec:relsystnet}
We generalize the approach from Section \ref{sec:relsyst11} and consider the relaxation system \eqref{eq:relsyst} on a network that allows for $N^-$ incoming and $N^+$ outgoing edges. Again, we aim to take the relaxation limit in a space discretization of the network system in order to derive a scheme for the scalar network \eqref{eq:scalarconservationnetwork}. By coupling multiple systems of the form \eqref{eq:relsyst} in the same way we coupled scalar conservation laws in \eqref{eq:scalarconservationnetwork} we obtain the system 
\begin{subequations}\label{eq:relsystnet}
  \begin{align}
    \ddt u^{k} + \ddx v^{k} &= 0, && \text{in }\mathcal{E}_k\times(0, \infty), &&k\in \delta^\mp, \label{eq:relsystnetu}\\
    \ddt v^{k} + \lambda_k^2 \, \ddx u^{k} &= \frac 1 \eps (f_k(u^{k}) - v^{k}), && \text{in }\mathcal{E}_k\times(0, \infty), &&k\in \delta^\mp \label{eq:relsystnetv}
  \end{align}
\end{subequations}
governing the scalar variables $u^k$ and $v^k$, where $k\in\delta^\mp=\delta^- \cup \delta^+=\{1,\dots,N\}$ indicates the corresponding edge of the network. The system includes a stiff source term that contains the flux functions $f_1,\dots,f_{N}:$ $\R \rightarrow \R$ and the relaxation rate $\eps$ that is chosen uniform over the edges of the network. Compared to the 1-to-1 network \eqref{eq:relsyst11} we do not account for $\eps$ in the variable names here for brevity of presentation. On each edge of the network a relaxation speed $\lambda_k>0$ is given, which satisfies the subcharacteristic condition, i.e.,
\begin{equation}\label{eq:subcharacteristicnetwork}
 -\lambda_k\leq f_k^\prime(u^k) \leq \lambda_k \quad \text{for all }u^k \text{ and }k\in \delta^\mp.
\end{equation}

The smooth and compactly supported scalar functions $u^{k,0}$ determine the initial condition of system \eqref{eq:relsystnet} by
\begin{equation}\label{eq:relsystnetinit}
u^{k}(x,0)=u^{k,0}(x), \quad  v^{k}(x,0)=f_k(u^{k, 0}(x))\quad k\in \delta^\mp.
\end{equation}
Furthermore, coupling conditions for the system variables at the coupling node have the form
\begin{equation}\label{eq:relsystnetcoupling}
  \Psi\left[(u^{1}(0^-, t), v^{1}(0^-, t)), \dots, (u^{N}(0^+,t),v^{N}(0^+, t))\right]=0 \quad \text{for a.e. }t>0.
\end{equation}
Since the system is linear, $N$ conditions are necessary to obtain a well-posed problem and thus $\Psi:\R^{2N}\rightarrow \R^N$ (compare system \eqref{eq:relsyst11}, which can be written as \eqref{eq:relsystnet} with $N=2$ and requires the two conditions given in \eqref{eq:relsyst11couplingconditions}). At a fixed time $t>0$ we assume given traces $u_0^{1}, v_0^{1}, \dots, u_0^{N}, v_0^{N}$ at $x=0$. Unlike in Section \ref{sec:relsyst11} we do not indicate if a trace is incoming or outgoing by a sign index as this can be seen from the edge index. We formally define a Riemann solver for system \eqref{eq:relsystnet} by
\begin{equation}\label{eq:riemannsolverrelsystnet}
\mathcal{RS}_\text{rel}: (u_0^{1}, v_0^{1}, \dots, u_0^{N}, v_0^{N}) \mapsto (u_R^{1}, v_R^{1}, \dots, u_L^{N}, u_L^{N}).
\end{equation}
The construction of \eqref{eq:riemannsolverrelsystnet} is discussed in the remainder of this section and in Section \ref{sec:fluxdistribution}. We aim to derive admissible coupling data that verifies the conservation of the system variables in the coupling node. To ensure admissible boundary data, $(u_R^k, v_R^k)$ needs to connect to $(u_0^{k}, v_0^{k})$ by a wave with negative velocity for all $k\in \delta^-$ and  $(u_L^{k}, u_L^{k})$ needs to connect to  $(u_0^{k}, u_0^{k})$ by a wave with positive velocity for all $k\in \delta^+$. Thus, by the analysis in Appendix \ref{app:eigenvalues}, we get the conditions
\begin{equation}\label{eq:relaxationnetwaves}
  (u_R^{k}, v_R^{k}) \in L^{1-}_{\lambda_k}(u_0^{k}, v_0^{k}) \quad \text{for }k\in \delta^- \quad \text{and}\quad  (u_L^{k}, v_L^{k}) \in L^{2+}_{\lambda_k}(u_0^{k}, v_0^{k})\quad \text{for }k\in \delta^+.
\end{equation}
Since all Lax-curves are parameterized by a single parameter, \eqref{eq:relaxationnetwaves} gives rise to $N$ unknowns, which are to be determined by the coupling conditions.
As done in \eqref{eq:relsyst11couplingconditions} for the 1-to-1 network we impose the Kirchhoff condition \eqref{eq:kirchhoff} for both system variables and obtain the two conditions
\begin{subequations}\label{eq:relsystnetcouplingconditions}
  \begin{align}
\Psi_1\left[(u_R^{1}, v_R^{1}), \dots, (u_L^{N}, v_L^{N})\right] &= \sum_{j\in\delta^{-}} v_L^{j} - \sum_{k\in\delta^{+}} v_R^{k} = 0,\label{eq:kirchhoffnet}\\
\Psi_2\left[(u_R^{1}, v_R^{1}), \dots, (u_L^{N}, v_L^{N})\right] &= \sum_{j\in\delta^{-}} \lambda_j^2\, v_L^{j} - \sum_{k\in\delta^{+}} \lambda_k^2 \, v_R^{k}  = 0. 
\end{align}
\end{subequations}

\subsection{Additional Conditions in the General Case}\label{sec:fluxdistribution}
The two conditions given by \eqref{eq:relsystnetcouplingconditions} are not sufficient to obtain a well-defined Riemann solver for the network system \eqref{eq:relsystnet}. Therefore $N-2$ suitable additional conditions need to be imposed. Here we consider algebraic conditions on the auxiliary variable at the coupling node of the form
\begin{equation}\label{eq:additionalnetworkconditions}
  \Psi_{\ell+2} \left[(u_R^{1}, v_R^{1}), \dots, (u_L^{N}, v_L^{N})\right]  = -r^\ell + \sum_{j\in\delta^{-}}  \beta_j^\ell\, v_L^{j} + \sum_{k\in\delta^{+}}  \beta_k^\ell\, v_R^{k} = 0 \quad \text{for }\ell=1,\dots,N-2
\end{equation}
with parameters $\beta_k^\ell$, $r^\ell\in\R$ for $k\in\delta^\pm$ and $\ell=1,\dots,N-2$. In the relaxation limit, the variable $v$ is the flux of the state variable of the conservation law in the relaxation limit and \eqref{eq:additionalnetworkconditions} can be understood as conditions on the fluxes at the coupling node when the limit scheme is considered.

Taking into account \eqref{eq:relaxationnetwaves} we denote by $\sigma_k$ the position of $(u_R^k, v_R^k)$ for $k\in\delta^-$ or of $(u_L^k, v_L^k)$ for $k \in \delta^-$ on the corresponding Lax curve given by \eqref{eq:laxcurves}. Then we get from \eqref{eq:relsystnetcouplingconditions} and \eqref{eq:additionalnetworkconditions} the linear system
\begin{subequations}\label{eq:netsigmaconditions}
\begin{align}
  \sum_{k\in\delta^\mp} \nu_k \, \lambda_k\, \sigma_k &= -\sum_{k\in\delta^\mp} \nu_k \, v_0^k,\\
  \sum_{k\in\delta^\mp}  \lambda_k^2 \, \sigma_k &= -\sum_{k\in\delta^\mp} \nu_k \, \lambda_k^2\, u_0^k,\\
    \sum_{k\in\delta^\mp} \beta_k^\ell \,  \lambda_k\, \sigma_k &= r^\ell -\sum_{k\in\delta^\mp} \beta_k^\ell \, v_0^k\quad\text{for }\ell=1,\dots,N-2 \label{eq:netsigmaadditional}
\end{align}
\end{subequations}
where $\nu_k$ denotes the sign of the edge defined as
\begin{equation}\label{eq:edgesign}
  \nu_k=\begin{cases}
          -1 &\text{if }k\in \delta^-,\\
          1 &\text{if } k \in \delta^+.
          \end{cases}
\end{equation}
When assigning parameters for \eqref{eq:additionalnetworkconditions} care must be taken that the system \eqref{eq:netsigmaconditions} has full rank. For example in case of a 2-to-1 network and $\lambda_1=\lambda_2=\lambda_3$ this is satisfied whenever $\beta_1^1 \neq \beta_2^1$. We introduce the following vector notations for traces and coupling data
\begin{equation}
  \mathbf{u}_0 = (u_0^1, \cdots, u_0^N)^T, \quad \mathbf{v}_0 = (v_0^1, \cdots, v_0^N)^T,\quad 
  \mathbf{u}_c = (u_R^1, \cdots, u_L^N)^T, \quad \mathbf{v}_c = (v_R^1, \cdots, v_L^N)^T.
\end{equation}
Moreover, we introduce the $N \times N$ diagonal matrices $\Lambda$ and $\mathcal{N}$ with diagonal entries $\lambda_1,\dots,\lambda_N$ and $\nu_1,\dots, \nu_N$, respectively. This allows us to express the coupling data by means of the linear systems 
\begin{equation}\label{eq:relaxationnetcouplingdata}
\mathcal{A} \mathcal{N} (\mathbf u_c - \mathbf u_0)=  \mathbf b, \qquad \mathcal{A} \Lambda^{-1} (\mathbf v_c - \mathbf v_0) =  \mathbf b,
\end{equation}
which hence define \eqref{eq:riemannsolverrelsystnet}. Here $\mathcal{A}$ and $\mathbf b$ denote the system matrix and the right-hand side of \eqref{eq:netsigmaconditions}, i.e.,
\begin{equation}\label{eq:riemannmatrix}
  \mathcal{A} =
  \begin{pmatrix}
    \nu_1 \, \lambda_1 &  \cdots & \nu_{N}\, \lambda_{N} \\[5pt]
    \lambda_1^2 & \cdots & \lambda_{N}^2 \\[5pt]
    \beta_{1}^1\, \lambda_1  & \cdots &  \beta_{N}^1\, \lambda_{N} \\[5pt]
    \vdots & & \vdots  \\
    \beta_{1}^{N-2}\, \lambda_1  & \cdots &  \beta_{N}^{N-2}\, \lambda_{N}
  \end{pmatrix},
  \quad \mathbf b =
  \begin{pmatrix}
    -\sum_{k\in\delta^\mp} \nu_k \, v_0^k \\[5pt]
    -\sum_{k\in\delta^\mp} \nu_k \, \lambda_k^2\, u_0^k\\[5pt]
    r^1 -\sum_{k\in\delta^\mp} \beta_k^1 \, v_0^k \\[5pt]
    \vdots \\
     r^{N-2} -\sum_{k\in\delta^\mp} \beta_k^{N-2} \, v_0^k
  \end{pmatrix}.
\end{equation}
In the rest of this section we give general suggestions for the parameters in \eqref{eq:additionalnetworkconditions}. However, we note that they are an issue of modeling and can be freely adapted to the problem at hand. We will assume non-negativity of the flux functions $f_1, \dots, f_N$ and the trace data $v_0^1,\dots, v_0^N$. In the relaxation limit, the latter follows directly from the non-negativity of the fluxes. 

\subsubsection{Incoming edges}
In case of multiple incoming edges a reasonable assumption is that the relation between the incoming flux from a given edge to the total incoming flux in the coupling node is inherited from the traces. Since the variable $v$ determines the flux of the primary variable $u$ this is stated as
\begin{equation}\label{eq:incomingfluxdistribution}
  \frac{v_R^m}{\sum_{k\in\delta^-} v_R^k} =   \frac{v_0^m}{\sum_{k\in\delta^-} v_0^k} \quad \text{for all }m\in \delta^-,
\end{equation}
provided that both $\sum_{k\in\delta^-} v_0^k \neq 0$ and  $\sum_{k\in\delta^-} v_R^k \neq 0$. This can be rewritten as the following conditions on the coupling data  
\begin{equation}
\left(\sum_{k\in\delta^-\setminus\{m\}} v_0^k \right) v_R^m - \sum_{k\in\delta^-\setminus\{m\}} v_0^m \, v_R^k = 0 \quad\text{for all }m\in \delta^-. 
\end{equation}
It is sufficient to impose \eqref{eq:incomingfluxdistribution} for all but one incoming edges, then still, by a summation argument, it follows for all incoming edges. Thus to account for assumption \eqref{eq:incomingfluxdistribution}, the parameters for $\ell=1,\dots,N^--1$ in \eqref{eq:netsigmaadditional} can be chosen as $\beta_\ell^\ell = \sum_{k\in\delta^-\setminus\{\ell\}} v_0^k$, $\beta_k^\ell = -v_0^m$ for $k \in \delta^-\setminus\{\ell\}$, $\beta_k^\ell=0$ for $k \in \delta^+$ and $r^\ell=0$. These trace dependent parameters lead to a nonlinear Riemann solver \eqref{eq:riemannsolverrelsystnet}, whose evaluation only requires the assembly of \eqref{eq:riemannmatrix} and the solution of the systems \eqref{eq:relaxationnetcouplingdata}. In case  $\sum_{k\in\delta^-} v_0^k \neq 0$ the system \eqref{eq:netsigmaconditions} (ignoring $\ell\geq N^-$) has maximal rank. To handle the case  $\sum_{k\in\delta^-} v_0^k = 0$, we can regularize \eqref{eq:incomingfluxdistribution} by adding a small constant $\epsilon>0$ to both denominators. To account for this regularization in the parameters, we add $\epsilon$ to $\beta_\ell^\ell$ and set $r^\ell=\epsilon v^\ell$ for $\ell=1,\dots,N^--1$. This way the Riemann solver can only assign nonzero incoming fluxes in the edge $k=N^-$ if all corresponding incoming traces are zero.

\subsubsection{Outgoing edges}
The distribution of the fluxes among the outgoing edges in a network is often described by a distribution matrix, see e.g. \cite{garavelloTrafficFlowNetworks2006}.
This matrix is comprised of rates that describe how the flux from any incoming edge is distributed to the outgoing edges. Similarly, we impose by 
\begin{equation}\label{eq:outgoingfluxdistribution}
  \sum_{k \in \delta^-} \alpha^{m}_k v^k_R =  v^m_L
\end{equation}
that the flux that enters the coupling node from edge $k\in\delta^-$ is distributed to edge $m\in \delta^+$ with rate $\alpha^m_k$. The rates are chosen such that $0 \leq \alpha^m_k\leq 1$ and 
\begin{equation}\label{eq:ratesum}
  \sum_{m \in \delta^+} \alpha^{m}_k = 1 \quad \text{for all }k\in\delta^-.
\end{equation}
Similar to condition \eqref{eq:incomingfluxdistribution} in case of incoming edges, condition \eqref{eq:outgoingfluxdistribution} only needs to be imposed to all but one outgoing edges, then it also follows for the remaining outgoing edge due to \eqref{eq:kirchhoffnet} and \eqref{eq:ratesum}. Thus, if we set the parameters in \eqref{eq:netsigmaconditions} for all $\ell=N^-,\dots,N-2$ to $\beta_k^\ell=\alpha^\ell_k$ for $k \in \delta^-$, $\beta^\ell_{\ell+1} = - 1$, $\beta_k^\ell=0$ for $k\in\delta^+\setminus\{\ell+1\}$ and $r^\ell=0$, condition \eqref{eq:outgoingfluxdistribution} holds for $m \in \delta^+$. When we further use the suggested parameters accounting for the incoming edges as discussed above, system~\eqref{eq:netsigmaconditions} has full rank and the Riemann solver \eqref{eq:riemannsolverrelsystnet} is well-defined.  

\subsection{Central Scheme for Networks of Scalar Conservation Laws}\label{sec:networkschemes}
A scheme for the scalar network \eqref{eq:scalarconservationnetwork} is obtained by a straightforward generalization of the steps in Sections \ref{sec:semidiscrete}--\ref{sec:highorder} and taking into account the generalized Riemann solver at the coupling node introduced in Sections \ref{sec:relsystnet} and \ref{sec:fluxdistribution}. Unlike in Section \ref{sec:scheme} we do not substitute explicit formulas of coupling data into the scheme in the general network case as was done in \eqref{eq:limitschemesub}. Instead each edge in the derivation is considered separately.

Hence, after discretizing system \eqref{eq:relsystnet} in space by scheme \eqref{eq:relaxationupwindscheme}, discretizing in time as in \eqref{eq:unsplit} and taking the relaxation limit we obtain a network generalization of scheme \eqref{eq:limitscheme} that can be written in conservative form as
\begin{subequations}\label{eq:netscheme}
\begin{equation}\label{eq:conservativeformnet}
  u_j^{k,n+1} = u_j^{k,n} - \frac{\Delta t }{\Delta x} \left( F_{j+1/2}^{k,n} - F_{j-1/2}^{k,n}\right) \quad \text{for all }j \in \mathcal{I}_k,
\end{equation}
where $\mathcal{I}_k=\Z^-$ if $k\in \delta^-$ or  $\mathcal{I}_k=\Z^+_0$ if $k\in \delta^+$. The numerical fluxes are given by
\begin{equation}\label{eq:netfluxes}
  F_{j-1/2}^{k,n} = \begin{cases}
                 \frac 1 2 \, (f_k(u_{j}^{k,n}) + f_k(u_{j-1}^{k,n}))  - \frac {\lambda_k} 2 (u_j^{k,n}-u_{j-1}^{k,n}) - {\mathcal{S}}^{k,n}_{j-1/2} & \text{if }j\,\nu_k>0,\\[5pt]
                 \frac 1 2 \, ( v_R^{k,n} + f_k(u_{-1}^{k,n}))  - \frac {\lambda_k} 2 (u_R^{k,n}-u_{-1}^{k,n}) &\text{if }j=0 \text{ and } k \in \delta^-, \\[5pt]
                 \frac 1 2 \, ( f_k(u_{0}^{k,n}) + v_L^{k,n})  - \frac {\lambda_k} 2 (u_0^{k,n}-u_{L}^{k,n}) &\text{if }j=0 \text{ and } k\in \delta^+. 
               \end{cases}
 \end{equation}
\end{subequations}
If the included high order extension terms ${\mathcal{S}}^{k,n}_{j-1/2}$ are set to zero, the first order scheme is obtained. To get the second order scheme, as derived in Section \ref{sec:highorder} for the 1-to-1 network, we set
\begin{equation}
{\mathcal{S}}^{k,n}_{j-1/2} = \frac {\Delta x} 2 (s_j^{k,n-}-s_{j-1}^{k,n+}),
\end{equation}
where the slopes of the linear reconstructions are given by
\begin{equation}\label{eq:mclimiternet}
  s_j^{k,n\mp} = \operatorname{minmod}\left( 2\frac{w^{k,n\mp}_{j}  - w^{k,n\mp}_{j-1} }{\Delta x}, ~\frac{w^{k,n\mp}_{j+1}  - w^{k,n\mp}_{j-1} }{2\,\Delta x}, ~ 2 \frac{w^{k,n\mp}_{j+1}  - w^{k,n\mp}_{j} }{\Delta x} \right),
\end{equation}
$w^{k,n\mp}_{j} = \frac 1 2 f_k(u^{k,n}_j) \mp \frac {\lambda_k}{2} u^{k,n}_j$ and the minmod operator \eqref{eq:minmod}. Coupling data is obtained by applying the Riemann solver \eqref{eq:riemannsolverrelsystnet} and setting
  \begin{equation}\label{eq:netschemers}
    \mathcal{RS}_\text{rel} (u_{-1}^{1,n}, f_1(u_{-1}^{1,n}), \dots, u_0^{N, n}, f_N(u_0^{N, n}))  \eqcolon (u_R^{1,n}, v_R^{1,n}, \dots, u_L^{N,n}, v_L^{N,n}).
  \end{equation}
The form \eqref{eq:conservativeformnet} does not imply mass conservation at the coupling node, which we instead analyze by considering the incoming and outgoing numerical fluxes and the coupling data. Taking into account \eqref{eq:laxcurves} in \eqref{eq:netfluxes} for $j=0$, it follows that the numerical fluxes coincide with the fluxes obtained by the Riemann solver, i.e., 
\begin{equation}\label{eq:nodefluxes}
   F^{k,n}_{-1/2} = v_R^{k,n} \quad\text{for }k\in\delta^-, \quad  F^{k,n}_{-1/2} = v_L^{k,n} \quad\text{for }k\in\delta^+. 
\end{equation}
Thus the following result is a consequence of \eqref{eq:kirchhoffnet} and \eqref{eq:nodefluxes}.
\begin{Prop}[Conservation property of the central scheme]
  The central scheme for the scalar network \eqref{eq:scalarconservationnetwork} given by \eqref{eq:netscheme} and coupling data \eqref{eq:netschemers}, \eqref{eq:riemannsolverrelsystnet} is conservative in the coupling node, i.e., it holds
  \begin{equation}
    \sum_{k\in\delta^-} F_{-1/2}^{k,n} =  \sum_{k\in\delta^+}  F_{-1/2}^{k,n}\quad \text{for all }n \in \N_0.
  \end{equation}
  It is further conservative on the full network in the case $f_1(0)=f_2(0)=\dots=f_N(0)=0$ and on bounded networks, where all edges that connect to the node are bounded and zero-flux boundary conditions are imposed.
  \end{Prop}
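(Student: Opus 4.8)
The plan is to treat the two assertions in turn, relying on the identity \eqref{eq:nodefluxes} that the numerical flux at the node equals the corresponding Riemann-solver flux. For conservation \emph{at} the node, I would combine \eqref{eq:nodefluxes} with the first Kirchhoff coupling condition \eqref{eq:kirchhoffnet}. Since \eqref{eq:nodefluxes} gives $F^{k,n}_{-1/2}=v_R^{k,n}$ for each $k\in\delta^-$ and $F^{k,n}_{-1/2}=v_L^{k,n}$ for each $k\in\delta^+$, summing over the incoming and outgoing edges separately and invoking the first Kirchhoff condition \eqref{eq:kirchhoffnet}, which balances the total incoming and outgoing node fluxes, i.e. $\sum_{k\in\delta^-}v_R^{k,n}=\sum_{k\in\delta^+}v_L^{k,n}$, yields $\sum_{k\in\delta^-}F^{k,n}_{-1/2}=\sum_{k\in\delta^+}F^{k,n}_{-1/2}$ at once. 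This part is essentially immediate once the flux identity is in hand.

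For the global mass balance I would sum the conservative update \eqref{eq:conservativeformnet} over all cells $j\in\mathcal{I}_k$ and all edges $k\in\delta^\mp$. On each edge the interior numerical fluxes telescope, so the net change of mass on edge $k$ equals the flux at the node minus the flux at the outer end of the edge. The crucial bookkeeping is the orientation: the node interface sits at the right end $x_{-1/2}=0$ of the last incoming cell $I_{-1}$ and at the left end of the first outgoing cell $I_0$, so the same quantity $F^{k,n}_{-1/2}$ enters the two telescoping sums with opposite signs, namely $+F^{k,n}_{-1/2}$ for $k\in\delta^-$ and $-F^{k,n}_{-1/2}$ for $k\in\delta^+$. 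After summation these node contributions cancel exactly by the node-conservation identity established in the first step, and it remains only to dispose of the outer-boundary terms.

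The two hypotheses enter precisely at the outer boundaries. On the full network the initial data are compactly supported and the stencil of \eqref{eq:netscheme} is finite, so at each fixed time level only finitely many cell averages are nonzero; the sum is therefore finite and every far-field flux reduces to $f_k(0)$ (the diffusive and slope contributions vanishing because the adjacent far-field states coincide), which the assumption $f_1(0)=\dots=f_N(0)=0$ annihilates. On a bounded network the imposed zero-flux boundary conditions set these outer fluxes to zero directly. In either case the telescoped sum collapses to the cancelling node terms, giving $\sum_{k\in\delta^\mp}\sum_{j\in\mathcal{I}_k}\bigl(u_j^{k,n+1}-u_j^{k,n}\bigr)=0$ and hence conservation of the total mass.

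I expect the only real obstacle to be the sign-and-index bookkeeping across the two edge orientations described above: everything hinges on the same symbol $F^{k,n}_{-1/2}$ appearing with opposite signs in the incoming and outgoing telescoping sums, which is exactly what makes the node fluxes cancel rather than accumulate. The flux identity \eqref{eq:nodefluxes} and the Kirchhoff condition \eqref{eq:kirchhoffnet} do all the analytic work; the remainder is careful summation. The second-order version requires no separate argument, since the reconstruction slopes also telescope in the interior and vanish both at the node, where the numerical flux coincides with the first-order one, and at the outer far field.
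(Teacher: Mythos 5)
Your proposal is correct and follows exactly the paper's route: the paper proves node conservation by combining the flux identity \eqref{eq:nodefluxes} with the Kirchhoff condition \eqref{eq:kirchhoffnet} (noting that the intended reading of \eqref{eq:kirchhoffnet} pairs $v_R$ with incoming and $v_L$ with outgoing edges, as you use), and the global statement follows from telescoping the conservative form \eqref{eq:conservativeformnet} with outer-boundary fluxes killed by $f_k(0)=0$ or the zero-flux conditions. Your sign bookkeeping at the node and the remark on the second-order slopes are consistent with the paper's treatment, so there is nothing to correct.
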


  \section{Numerical Experiments}
  In this section we apply the derived schemes in various numerical experiments to demonstrate their capabilities and performance. We focus on the limit schemes for system \eqref{eq:scalarconservationnetwork} and do not take into account discretizations of the relaxation system. The experiments consider scalar conservation laws on 1-to-1 and 2-to-1 networks.

  We assume that the network nodes are bounded and parameterized by $(-1, 0)$ if they are incoming or by $(0, 1)$ if they are outgoing. Each edge is discretized over $m$ uniform mesh cells of size $\Delta x = 1/m$. We use a fixed relaxation speed $\lambda$ over the full network and take time increments as
  \begin{equation}\label{eq:cflcondition}
    \Delta t = \operatorname{CFL} \, \frac{\Delta x}{\lambda}.
  \end{equation}
  Details on the used number of mesh cells, Courant number and boundary conditions are provided in the individual experiment descriptions. The employed computer programs are implemented in the Julia programming language \cite{bezanson2017julia} and publicly available from~\cite{CentralNetworkSchemeCode}.
  
  \subsection{Inviscid Burgers' equation}
In the first numerical experiment we consider the inviscid Burgers' equation on a 1-to-1 network. To test the accuracy of the schemes we impose identical fluxes on both edges of the network given by \eqref{eq:scalarconservation11} and $f_1(u)=f_2(u)=1/2 \,u^2$. We consider an experiment adapted from \cite{kurganovSemidiscreteCentralupwindSchemes2001} and employ the initial data
\begin{equation}
    u_0(x) = \frac 1 2 + \frac {\sin(\pi(x+1))}{2}
\end{equation}
  and periodic boundary conditions. While the solution of this problem is smooth at small times it later develops a shock discontinuity. This can be seen in Figure \ref{fig:burgersexperiment}, which shows the numerical solutions computed by the first order scheme \eqref{eq:limitschemeconservative} and the MUSCL scheme \eqref{eq:conservativeform}, \eqref{eq:MUSCLflux}. In the computations $m=200$ mesh cell were used left and right from the coupling node, the relaxation speed was assumed $\lambda=1$ and Courant numbers were chosen $0.9$ in the first order and $0.2$ in the MUSCL scheme. The computations show that the higher order scheme achieves a sharper resolution of the developing shock. Furthermore, the reduction in accuracy in the coupling node ($x=0$), which this scheme experiences, is visible at time instance $t=0.2$.
  \begin{figure}
  \centering
  \begin{tikzpicture}
  \begin{groupplot}[
        group style={group size=3 by 2,
            horizontal sep = .5 cm, 
            vertical sep = .2 cm,
            xticklabels at=edge bottom,
            yticklabels at=edge left}, 
          width = .4 \linewidth,
          height = .28 \linewidth,
          xmin=-1,xmax=1,ymin=-.1, ymax=1.1,
          every tick label/.append style={font=\scriptsize},
          title style={font=\scriptsize},
          every axis plot/.append style={
            only marks, mark=+, mark size=1.0 pt, line width=0.15
          }
          ]
        \nextgroupplot[title={$t=0.2$}]
        \addplot [] table [x index=0, y index=1] {input/burgers_1storder_1.dat};
        \nextgroupplot[title={$t=0.4$}]
        \addplot [] table [x index=0, y index=1] {input/burgers_1storder_2.dat};
        \nextgroupplot[title={$t=0.65$}]
        \addplot [] table [x index=0, y index=1] {input/burgers_1storder_3.dat};
        \nextgroupplot[]
        \addplot [] table [x index=0, y index=1] {input/burgers_2ndorder_1.dat};
        \nextgroupplot[]
        \addplot [] table [x index=0, y index=1] {input/burgers_2ndorder_2.dat};
        \nextgroupplot[]
        \addplot [] table [x index=0, y index=1] {input/burgers_2ndorder_3.dat};
  \end{groupplot}
\end{tikzpicture}
  \caption{Numerical results of the first (top row) and second (bottom row) order central scheme applied to the inviscid Burgers' equation on a 1-to-1 network. A shock discontinuity develops as time evolves. Computations employed $200$ mesh cells left and right from the network node at $x=0$ and Courant numbers $\text{CFL}=0.9$ in the first and $\text{CFL}=0.2$ in the second order scheme. }\label{fig:burgersexperiment} 
\end{figure}
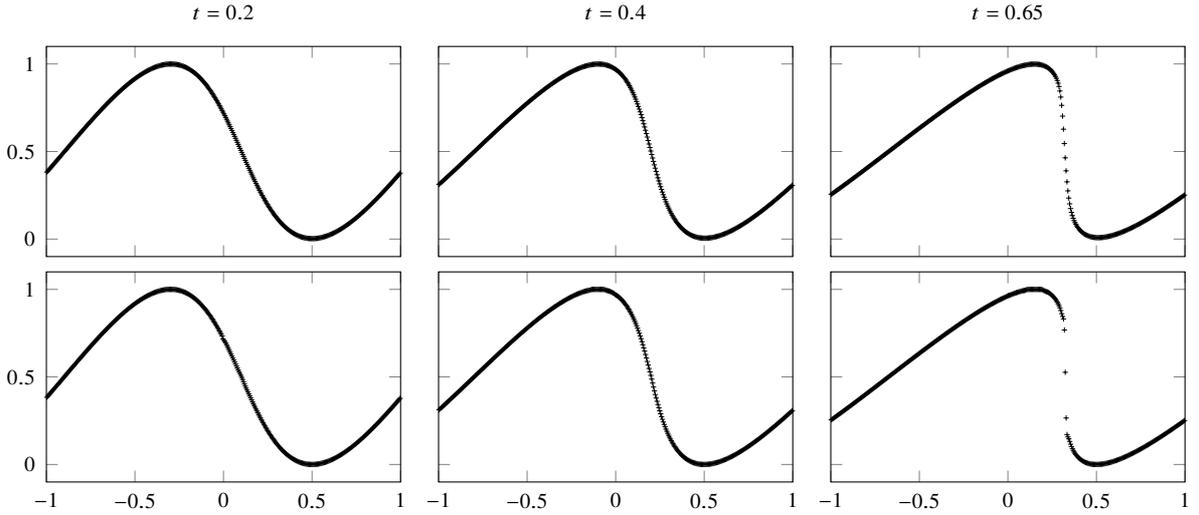

To further investigate the impact of the coupling node we computed the $L^1$ and $L^\infty$ errors after mesh refinement as well as the corresponding experimental order of convergence (EOC). Besides the first order scheme (central scheme) we distinguish three variants of the MUSCL scheme: the first (central MUSCL) makes use of the coupling data in the computation of the slopes  $s_{-1}^{n-}$ and $s_0^{n+}$, see Section \ref{sec:highorder}, the second (central MUSCL TVD) sets these slopes to zero and is TVD according to Proposition \ref{prop:TVD} and the third (uncoupled MUSCL) applies the scheme to the uncoupled case, where the inviscid Burgers' equation is considered on the domain $(-1,1)$ discretized over $2m$ cells. To see how the error depends on $\Delta x$ we computed different mesh solutions with time increments chosen according to \eqref{eq:cflcondition} and $\text{CFL}=0.49$ for the first order scheme and fixed $\Delta t=2 \times 10^{-6}$ for the MUSCL schemes. We computed the errors at time $t=0.5$ when the problem still admits a smooth solution.

\begin{table}
    \centering
    \scriptsize
    \begin{tabular}{r @{\hspace{4em}} rr @{\hspace{4em}} rr  @{\hspace{4em}} rr  @{\hspace{4em}} rr}
      \toprule
      &  \multicolumn{2}{l}{central scheme} &   \multicolumn{2}{l}{central MUSCL} &  \multicolumn{2}{l}{central MUSCL TVD} &  \multicolumn{2}{l}{uncoupled MUSCL} \\[3pt]
      $\frac{1}{\Delta x}$ & $L^1$-error & EOC & $L^1$-error & EOC & $L^1$-error & EOC & $L^1$-error & EOC\\
      \midrule
100 & $2.413 \times 10^{-2}$ & & $1.848 \times 10^{-3}$ & & $2.892 \times 10^{-3}$ & & $9.015 \times 10^{-4}$ & \\
200 & $1.339 \times 10^{-2}$ & 0.85 & $5.009 \times 10^{-4}$ & 1.88 & $7.914 \times 10^{-4}$ & 1.87 & $1.863 \times 10^{-4}$ & 2.27 \\
400 & $7.044 \times 10^{-3}$ & 0.93 & $1.272 \times 10^{-4}$ & 1.98 & $2.017 \times 10^{-4}$ & 1.97 & $4.838 \times 10^{-5}$ & 1.94 \\
800 & $3.639 \times 10^{-3}$ & 0.95 & $3.137 \times 10^{-5}$ & 2.02 & $4.949 \times 10^{-5}$ & 2.03 & $1.275 \times 10^{-5}$ & 1.92 \\
\bottomrule                                                                            
  \end{tabular}
  \caption{$L^1$ errors and EOCs in space in the inviscid Burgers' equation experiment. Errors were considered at time instance $t=0.5$ and EOCs were computed by the formula $\log2(E_1/E_2)$, where $E_1$ and $E_2$ denote the errors of a scheme in two consecutive lines of the table. The computed EOCs confirm the expected orders of the schemes.}\label{tab:l1}
\end{table}

\begin{table}
  \centering
  \scriptsize
  \begin{tabular}{r @{\hspace{4em}} rr @{\hspace{4em}} rr  @{\hspace{4em}} rr  @{\hspace{4em}} rr}
    \toprule
    &  \multicolumn{2}{l}{central scheme} &   \multicolumn{2}{l}{central MUSCL} &  \multicolumn{2}{l}{central MUSCL TVD} &  \multicolumn{2}{l}{uncoupled MUSCL} \\[3pt]
    $\frac{1}{\Delta x}$ & $L^\infty$-error & EOC & $L^\infty$-error & EOC & $L^\infty$-error & EOC & $L^\infty$-error & EOC\\
    \midrule
100 & $4.626 \times 10^{-2}$ & & $1.677 \times 10^{-2}$ & & $2.419 \times 10^{-2}$ & & $7.262 \times 10^{-3}$ & \\
200 & $2.881 \times 10^{-2}$ & 0.68 & $5.614 \times 10^{-3}$ & 1.58 & $8.769 \times 10^{-3}$ & 1.46 & $1.696 \times 10^{-3}$ & 2.10 \\
400 & $1.719 \times 10^{-2}$ & 0.75 & $2.096 \times 10^{-3}$ & 1.42 & $3.397 \times 10^{-3}$ & 1.37 & $3.536 \times 10^{-4}$ & 2.26 \\
800 & $9.694 \times 10^{-3}$ & 0.83 & $8.188 \times 10^{-4}$ & 1.36 & $1.362 \times 10^{-3}$ & 1.32 & $9.623 \times 10^{-5}$ & 1.88 \\
\bottomrule
\end{tabular}
\caption{$L^\infty$ errors and EOCs in space in the inviscid Burgers' equation experiment. Errors were considered at time instance $t=0.5$ and EOCs were computed by the formula $\log2(E_1/E_2)$, where $E_1$ and $E_2$ denote the errors of a scheme in two consecutive lines of the table. EOCs in central MUSCL and central MUSCL TVD are reduced due to the order reduction at the coupling node.}\label{tab:linf}
\end{table}

The computed $L^1$ errors shown in Table \ref{tab:l1} confirm the expected orders of convergence, i.e., first order in the central scheme and second order in the MUSCL variants. The presence of the coupling node in the schemes central MUSCL and central MUSCL TVD only slightly reduces the accuracy when compared to the uncoupled MUSCL scheme and does not interfere in the convergence order in $L^1$. Thereby the central MUSCL scheme, which was also TVD in all numerical tests, yields a higher accuracy than the central MUSCL TVD scheme. The errors in $L^\infty$ shown in Table \ref{tab:linf} behave differently. While the central scheme and the uncoupled MUSCL scheme yield the expected first and second experimental order, respectively, the experimental orders of the schemes central MUSCL and central MUSCL TVD are significantly reduced due to the handling of the coupling node. Still, these two schemes achieve high accuracy in $L^\infty$ similar to the uncoupled scheme. 

\subsection{Traffic Flow}
The second numerical experiment is concerned with a traffic scenario and imposes the Lighthill-Whitham-Richards (LWR) model \cite{lighthillKinematicWavesII1955, richardsShockWavesHighway1956}
\begin{equation}\label{eq:lwr}
  \ddt u + \ddx \left( u \left(1 - \frac{u}{u_\text{max}} \right) \right) = 0
\end{equation}
on the edges of a 2-to-1 network. In more details, we consider the scalar problem \eqref{eq:scalarconservationnetwork} with two incoming edges ($N^-=2$) and one outgoing edge ($N^+=1$). We assume that the outgoing edge has larger capacity than the incoming ones and allows for higher traffic densities. This is reflected in the flux functions, which we set
\begin{equation*}
  f_1(u)=f_2(u) = u(1-u), \quad f_3(u) = u \left( 1- \frac{u}{1.2} \right).
\end{equation*}
Coupling conditions for traffic models have been a topic of high interest, see e.g., \cite{garavelloTrafficFlowNetworks2006, gottlichSecondOrderTrafficFlow2021}. We are interested how the common \emph{flow maximization} approach for coupling of \eqref{eq:lwr} on networks compares to the coupling model implied by the presented scheme.

We sketch how coupling data is obtained according to flow maximization on 2-to-1 networks. Hereby we focus on the computation of the fluxes $v^1_R= f_1(u^1_R)$, $v^2_R= f_2(u^2_R)$ and $v^3_L= f_3(u^3_L)$. To constitute admissible boundary data these fluxes need to satisfy the demand and supply conditions, which impose upper bounds of the form
\begin{equation}\label{eq:demandsupply}
  v^1_R \leq d_1(u_0^1), \quad v^2_R \leq d_2(u_0^2), \quad v^3_L \leq s_3(u_0^3)
\end{equation}
for details see \cite{garavelloTrafficFlowNetworks2006}.
Coupling fluxes are then chosen maximal under constraints given by \eqref{eq:demandsupply} and the Kirchhoff conditon \eqref{eq:kirchhoffnet}. Two cases can occur.
In the so called \emph{free flow} case, where $ d_1(u_0^1) + d_2(u_0^2) \leq s_3(u_0^3)$, we take $v^1_R = d_1(u_0^1)$, $v^2_R = d_2(u_0^2)$ and $v^3_L= d_1(u_0^1) + d_2(u_0^2)$.
The complimentary case is referred to as \emph{congestion}. Here we set $v^3_L=s_3(u_0^3)$ and employ the right of way parameter $0\leq \beta \leq 1$ to set $v_R^1=\beta s_3(u_0^3)$ and $v_R^2=(1-\beta) s_3(u_0^3)$. If this leads to a violation of \eqref{eq:demandsupply} for either $v^1_R$ or $v^2_R$ the affected coupling flux is chosen as the respective upper bound and its counterpart is computed from the Kirchhoff condition. This procedure defines a Riemann solver, which can be applied to cell averages of a numerical scheme. To account for flow maximization in numerical simulations we employed scheme \eqref{eq:netscheme} and replaced the fluxes at the coupling node by 
\begin{equation*}
  F^{1,n}_{-1/2} = v^{1,n}_R, \quad  F^{2,n}_{-1/2} = v^{2,n}_R, \quad  F^{3,n}_{-1/2} = v^{3,n}_L, 
\end{equation*}
where the coupling fluxes were computed from the trace data $u_0^{1,n}$, $u_0^{2,n}$ and $u_0^{3,n}$ as above.

In contrast, coupling data in the central scheme was obtained from the linear systems \eqref{eq:relaxationnetcouplingdata}. The parameters in system \eqref{eq:netsigmaconditions} were chosen so that \eqref{eq:incomingfluxdistribution} was satisfied, i.e., $\beta^1_1= v^2_0$, $\beta^1_2=-v_0^1$ and $\beta^1_3=r^1=0$.
In the numerical computations for both coupling models we used $m=200$ cells on each edge and assumed $\lambda=1$. We further imposed zero-flux boundary conditions on the incoming edges and homogeneous Neumann boundary conditions on the outgoing edge.

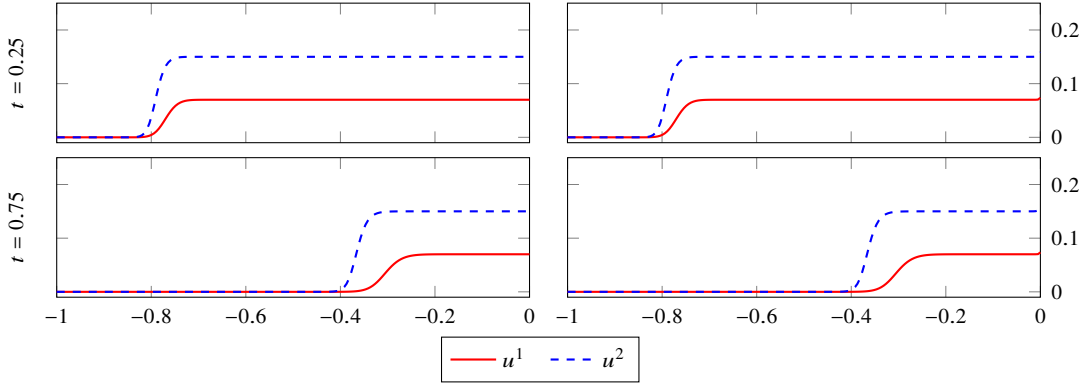
\begin{figure}
  \centering
  \begin{tikzpicture}
  \begin{groupplot}[
        group style={group size=2 by 2,
            horizontal sep = .5 cm, 
            vertical sep = .2 cm,
            xticklabels at=edge bottom,
            yticklabels at=edge right}, 
          width = .5 \linewidth,
          height = .22 \linewidth,
          xmin=-1,xmax=0,ymin=-.01, ymax=.25,
          every tick label/.append style={font=\scriptsize},
          label style={font=\scriptsize},
          legend to name=leg:free-flow,
          legend columns=-1,
          legend style={font=\scriptsize, fill=none, /tikz/every even column/.append style={column sep=.25cm}}
          ]
        \nextgroupplot[ylabel={$t=0.25$}]
        \addplot [color=red, thick] table [x index=0, y index=3] {input/LWR_TrafficFlowMaximization_freeFlow_2.dat};
        \addplot [color=blue, thick, dashed] table [x index=0, y index=4] {input/LWR_TrafficFlowMaximization_freeFlow_2.dat};
        \nextgroupplot[]
        \addplot [color=red, thick] table [x index=0, y index=3] {input/LWR_CentralRelaxationLimit_freeFlow_2.dat};
        \addplot [color=blue, thick, dashed] table [x index=0, y index=4] {input/LWR_CentralRelaxationLimit_freeFlow_2.dat};
        \nextgroupplot[ylabel={$t=0.75$}]
        \addplot [color=red, thick] table [x index=0, y index=3] {input/LWR_TrafficFlowMaximization_freeFlow_3.dat};
        \addplot [color=blue, thick, dashed] table [x index=0, y index=4] {input/LWR_TrafficFlowMaximization_freeFlow_3.dat};
        \nextgroupplot[]
        \addplot [color=red, thick] table [x index=0, y index=3] {input/LWR_CentralRelaxationLimit_freeFlow_3.dat};
        \addplot [color=blue, thick, dashed] table [x index=0, y index=4] {input/LWR_CentralRelaxationLimit_freeFlow_3.dat};
        \legend{$u^1$, $u^2$}
  \end{groupplot}
\end{tikzpicture}
\ref{leg:free-flow}
  \caption{Incoming edges in the 2-to-1 LWR network in case of free flow. Flow maximization (left) and the central approach (right) at the coupling node are compared. The first order central scheme with $\text{CFL}=0.49$ and $m=200$ cells per edge was used for the numerical simulation, nodal fluxes were replaced in case of flow maximization. The central approach yields the same dynamics of the solution as flow maximization.}\label{fig:freeflow} 
\end{figure}

To investigate the case of free flow, we first consider a numerical experiment with constant initial data on the edges chosen as $u^{0,1}= 0.07$, $u^{0,2}= 0.15$ and $u^{0,3}= 0.2$. In both coupling models the traffic freely propagates from the incoming edges to the outgoing edge and eventually out of the network. The numerical solutions computed by the first order central scheme are presented in Figure \ref{fig:freeflow} and show that the central scheme reproduces the dynamics of flow maximization in this case.

\begin{figure}
  \centering
  \begin{tikzpicture}
  \begin{groupplot}[
        group style={group size=2 by 2,
            horizontal sep = .5 cm, 
            vertical sep = .2 cm,
            xticklabels at=edge bottom,
            yticklabels at=edge right}, 
          width = .5 \linewidth,
          height = .22 \linewidth,
          xmin=-1,xmax=0,ymin=-.01, ymax=1.1,
          every tick label/.append style={font=\scriptsize},
          legend to name=leg:congestion,
          legend columns=-1,
          legend style={font=\scriptsize, fill=none, /tikz/every even column/.append style={column sep=.25cm}},
          label style={font=\scriptsize}
          ]
        \nextgroupplot[ylabel={$t=0.5$}]
        \addplot [color=red, thick] table [x index=0, y index=3] {input/LWR_TrafficFlowMaximization_congestion_beta2_3.dat};
        \addplot [color=blue, thick, dashed] table [x index=0, y index=4] {input/LWR_TrafficFlowMaximization_congestion_beta2_3.dat};
        \addplot [color=red, thick, dotted] table [x index=0, y index=3] {input/LWR_TrafficFlowMaximization_congestion_beta5_3.dat};
        \addplot [color=blue, thick, dotted] table [x index=0, y index=4] {input/LWR_TrafficFlowMaximization_congestion_beta5_3.dat};
        \nextgroupplot[]
        \addplot [color=red, thick] table [x index=0, y index=3] {input/LWR_CentralRelaxationlimit_congestion_3.dat};
        \addplot [color=blue, thick, dashed] table [x index=0, y index=4] {input/LWR_CentralRelaxationlimit_congestion_3.dat};
        \nextgroupplot[ylabel={$t=1$}]
        \addplot [color=red, thick] table [x index=0, y index=3] {input/LWR_TrafficFlowMaximization_congestion_beta2_4.dat};
        \addplot [color=blue, thick, dashed] table [x index=0, y index=4] {input/LWR_TrafficFlowMaximization_congestion_beta2_4.dat};
        \addplot [color=red, thick, dotted] table [x index=0, y index=3] {input/LWR_TrafficFlowMaximization_congestion_beta5_4.dat};
        \addplot [color=blue, thick, dotted] table [x index=0, y index=4] {input/LWR_TrafficFlowMaximization_congestion_beta5_4.dat};
        \nextgroupplot[]
        \addplot [color=red, thick] table [x index=0, y index=3] {input/LWR_CentralRelaxationlimit_congestion_4.dat};
        \addplot [color=blue, thick, dashed] table [x index=0, y index=4] {input/LWR_CentralRelaxationlimit_congestion_4.dat};
        \legend{$u^1$, $u^2$}
      \end{groupplot}
    \end{tikzpicture}
\ref{leg:congestion}
  \caption{Incoming edges in the 2-to-1 LWR network in case of congestion. Flow maximization (left) and the central approach (right) at the coupling node are compared. The first order central scheme with $\text{CFL}=0.2$ and $m=200$ cells per edge was used for the numerical simulation, nodal fluxes were replaced in case of flow maximization. Solutions in the flow maximization case are shown for $\beta=0.2$ (solid red and dashed blue line) and $\beta = 0.5$ (dotted lines). The central approach leads to the same qualitative dynamics as flow maximization in case of $\beta=0.2$.}\label{fig:congestionincoming} 
\end{figure}
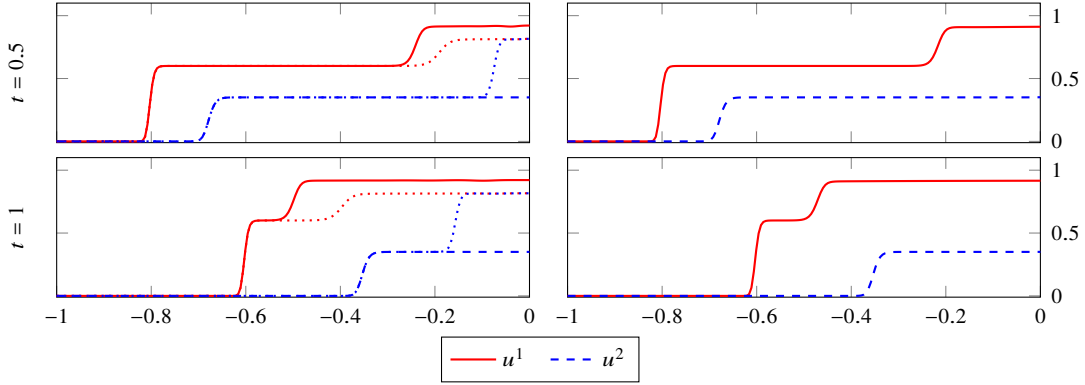

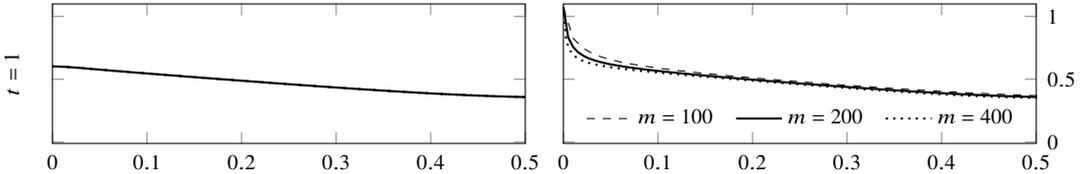
\begin{figure}
  \centering
  \begin{tikzpicture}
  \begin{groupplot}[
        group style={group size=2 by 1,
            horizontal sep = .5 cm, 
            vertical sep = .2 cm,
            xticklabels at=edge bottom,
            yticklabels at=edge right}, 
          width = .5 \linewidth,
          height = .22 \linewidth,
          xmin=0,xmax=.5,ymin=-.01, ymax=1.1,
          legend columns=-1,
          legend style={font=\scriptsize, draw=none, fill=none, /tikz/every even column/.append style={column sep=.22cm}},
          legend pos= south west,
          every tick label/.append style={font=\scriptsize},
          label style={font=\scriptsize}
          ]
        \nextgroupplot[ylabel={$t=1$}]
        \addplot [color=black, thick] table [x index=2, y index=5] {input/LWR_TrafficFlowMaximization_congestion_beta2_4.dat};
        \addplot [color=black, thick, dotted] table [x index=2, y index=5] {input/LWR_TrafficFlowMaximization_congestion_beta5_4.dat};
        \nextgroupplot[]
        \addplot [color=black, dashed] table [x index=2, y index=5] {input/LWR_CentralRelaxationlimit_congestion_refine100_4.dat};
        \addplot [color=black, thick] table [x index=2, y index=5] {input/LWR_CentralRelaxationlimit_congestion_4.dat};
        \addplot [color=black, thick, dotted] table [x index=2, y index=5] {input/LWR_CentralRelaxationlimit_congestion_refine400_4.dat};
        \legend{$m=100$, $m=200$, $m=400$},
      \end{groupplot}
\end{tikzpicture}
  \caption{Outgoing edge in the 2-to-1 LWR network in case of congestion.  Flow maximization (left) and the central approach (right) at the coupling node are compared. The first order central scheme employing $\text{CFL}=0.2$ and $m=200$ cells per edge was used for the numerical simulation, nodal fluxes were replaced in case of flow maximization. The central approach is shown for various mesh resolutions are shown. The central approach introduces a layer next to the coupling node, which decays in width as the mesh is refined.}\label{fig:congestionoutgoing} 
\end{figure}

\begin{figure}
  \centering
  \begin{tikzpicture}
  \begin{groupplot}[
        group style={group size=2 by 1,
            horizontal sep = .5 cm, 
            vertical sep = .2 cm,
            xticklabels at=edge bottom,
            yticklabels at=edge right}, 
          width = .5 \linewidth,
          height = .22 \linewidth,
          xmin=-1,xmax=0,ymin=-.01, ymax=1.1,
          every tick label/.append style={font=\scriptsize},
          label style={font=\scriptsize},
          legend to name=leg:congestion-second-order,
          legend columns=-1,
          legend style={font=\scriptsize, fill=none, /tikz/every even column/.append style={column sep=.25cm}}
          ]
        \nextgroupplot[ylabel={$t=1$}]
        \addplot [color=red, thick] table [x index=0, y index=3] {input/LWR_CentralRelaxationLimit_congestion_SecondOrder_4.dat};
        \addplot [color=blue, thick, dashed] table [x index=0, y index=4] {input/LWR_CentralRelaxationLimit_congestion_SecondOrder_4.dat};
        \addplot [color=black, thick] coordinates {(-5,0)};
        \nextgroupplot[xmin=0,xmax=0.5]
        \addplot [color=red, thick] coordinates {(-5,0)};
        \addplot [color=blue, thick, dashed] coordinates {(-5,0)};
        \addplot [color=black, thick] table [x index=2, y index=5] {input/LWR_CentralRelaxationlimit_congestion_SecondOrder_4.dat};
        \legend{$u^1$, $u^2$, $u^3$}
      \end{groupplot}
    \end{tikzpicture}
    \ref{leg:congestion-second-order}
  \caption{Incoming (left) and outgoing (right) edges in the 2-to-1 LWR network in case of congestion. The central MUSCL scheme with $\text{CFL}=0.2$ and $m=200$ cells per edge was used for the numerical simulation.}\label{fig:congestionhighorder} 
\end{figure}
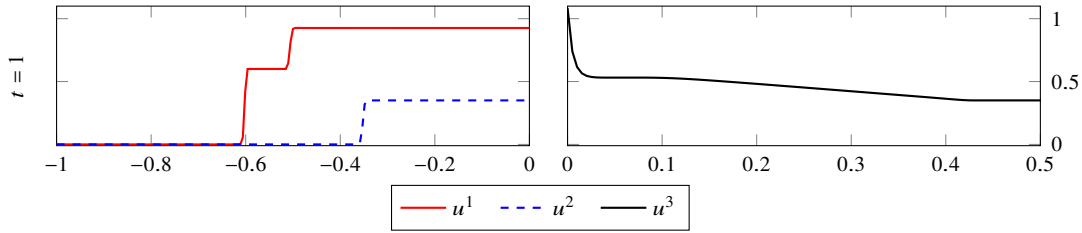

In a second numerical experiment we consider another Riemann problem using the modified initial data
$u^{0,1}= 0.6$, $u^{0,2}= 0.35$ and $u^{0,3}= 0.35$. The larger traffic densities lead to congestion at the coupling node. The numerical results by the first order scheme depicted in Figure~\ref{fig:congestionincoming} exhibit backward moving waves in the first incoming edge for both the flow maximization and the central approach. The behavior of the solution in the flow maximization case depends on the right of way parameter $\beta$. While smaller $\beta$ (solution for $\beta=0.2$ shown) lead to backward moving waves only in the first incoming edge, larger $\beta$ (solution for $\beta=0.5$ shown) lead to waves in both incoming edges. The solution of the central approach on the incoming edges is qualitatively similar to the case of flow maximization and a small right of way parameter. On the outgoing edge the central approach introduces a layer next to the coupling node connecting the sum of both incoming traffic densities near the coupling node to a decaying profile that is also obtained by the flow maximization approach, see Figure~\ref{fig:congestionoutgoing}. Computations on various meshes reveal that the layer is mesh dependent and decreases as the mesh is refined. Moreover, Figure~\ref{fig:congestionhighorder} shows that the central MUSCL scheme recovers the same dynamics as the first order central scheme but yields a smaller layer at the coupling node and higher resolution of the discontinuities.

\subsection{Coupled Two-Phase Flow Model}
Lastly, we apply our approach to the Buckley--Leverett equation \cite{buckleyMechanismFluidDisplacement1942}, a simple model of two-phase flow. Given a mixture of water and oil in a tube of porous media, the water fraction $u$ in the model is governed by a scalar conservation law with the non-convex flux function
\begin{equation}
  f(u) = \frac{u^2}{u^2 + \frac 1 2 (1-u)^2}.
\end{equation}
Again, we consider a 2-to-1 network and impose the model on its edges by taking the flux functions $f_1=f_2=f_3=f$. In a numerical experiment we reproduce a scenario, in which water is pumped into two tubes in order to displace oil and enforce its outflow through a third tube. To this end we use the initial data
\begin{equation}
  u^{0,1}(x)=
  \begin{cases}
    1 & \text{if } x\leq -0.5,\\
    0 & \text{if } x> -0.5
  \end{cases},\quad
  u^{0,2} = 0.16, \quad u^{0,3}=0
\end{equation}
and homogeneous Neumann boundary conditions at the edges. The numerical solution computed by the central MUSCL scheme employing $m=300$ cells per edge, $\text{CFL=0.49}$ and $\lambda=2.5$ is shown in Figure \ref{fig:BL}. In the first incoming edge a shock wave  is formed that is followed by a rarefaction wave and passes through the coupling node to the outgoing edge, where it interacts with a second shock wave originating from the second incoming edge. We emphasize that our numerical approach resolved these network dynamics without analysis of the underlying complex (due to the non-convexity of the flux function) Riemann problem.
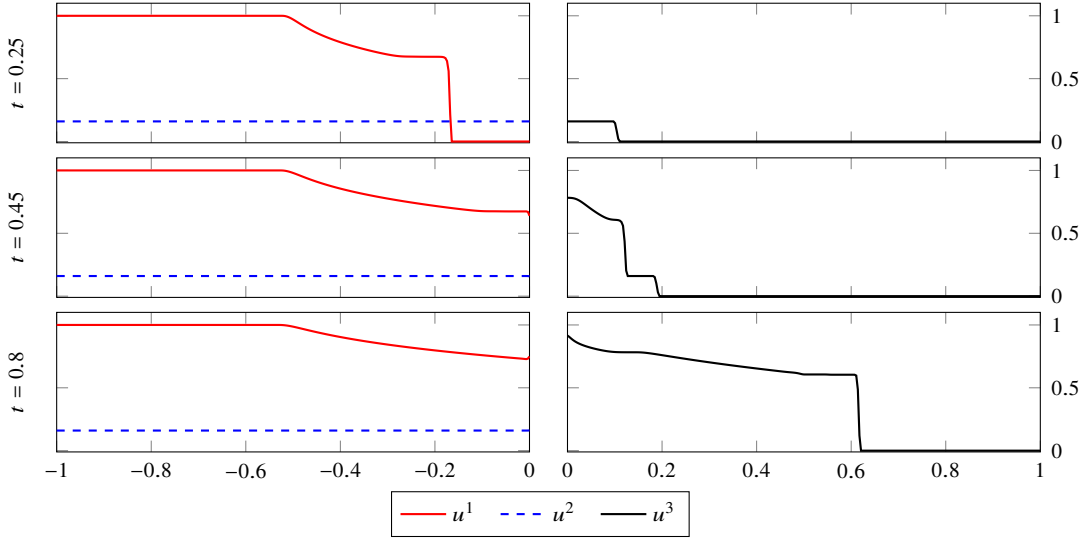
\begin{figure}
  \centering
  \begin{tikzpicture}
  \begin{groupplot}[
        group style={group size=2 by 3,
            horizontal sep = .5 cm, 
            vertical sep = .2 cm,
            xticklabels at=edge bottom,
            yticklabels at=edge right}, 
          width = .5 \linewidth,
          height = .22 \linewidth,
          xmin=-1,xmax=0,ymin=-.01, ymax=1.1,
          every tick label/.append style={font=\scriptsize},
          label style={font=\scriptsize},
          legend to name=leg:BL,
          legend columns=-1,
          legend style={font=\scriptsize, fill=none, /tikz/every even column/.append style={column sep=.25cm}}
          ]
        \nextgroupplot[ylabel={$t=0.25$}]
        \addplot [color=red, thick] table [x index=0, y index=3] {input/BL21_2.dat};
        \addplot [color=blue, thick, dashed] table [x index=0, y index=4] {input/BL21_2.dat};
        \nextgroupplot[xmin=0,xmax=1]
        \addplot [color=black, thick] table [x index=2, y index=5] {input/BL21_2.dat};
        \nextgroupplot[ylabel={$t=0.45$}]
        \addplot [color=red, thick] table [x index=0, y index=3] {input/BL21_3.dat};
        \addplot [color=blue, thick, dashed] table [x index=0, y index=4] {input/BL21_3.dat};
        \nextgroupplot[xmin=0,xmax=1]
        \addplot [color=black, thick] table [x index=2, y index=5] {input/BL21_3.dat};
        \nextgroupplot[ylabel={$t=0.8$}]
        \addplot [color=red, thick] table [x index=0, y index=3] {input/BL21_4.dat};
        \addplot [color=blue, thick, dashed] table [x index=0, y index=4] {input/BL21_4.dat};
        \nextgroupplot[xmin=0,xmax=1]
        \addplot [color=red, thick] coordinates {(-5,0)};
        \addplot [color=blue, thick, dashed] coordinates {(-5,0)};
        \addplot [color=black, thick] table [x index=2, y index=5] {input/BL21_4.dat};
        \legend{$u^1$, $u^2$, $u^3$}
      \end{groupplot}
    \end{tikzpicture}
    \ref{leg:BL}
  \caption{Incoming (left) and outgoing (right) edges in the numerical solution of the Buckley--Leverett equation on a 2-to-1 network. The solution was computed by the central MUSCL scheme using $\text{CFL}=0.49$ and $m=300$ cells per edge.}\label{fig:BL} 
\end{figure}

{\small {\bf Acknowledgment}
The authors thank the Deutsche Forschungsgemeinschaft (DFG, German Research Foundation) for the financial support through 320021702/GRK2326,  333849990/IRTG-2379, CRC1481, HE5386/18-1,19-2,22-1,23-1, ERS SFDdM035 and under Germany’s Excellence Strategy EXC-2023 Internet of Production 390621612 and under the Excellence Strategy of the Federal Government and the Länder. Support through the EU ITN DATAHYKING is also acknowledged. The authors acknowledge the support of the Banff International Research Station (BIRS) for the Focused Research Group [22frg198] “Novel perspectives in kinetic equations for emerging phenomena”, July 17-24, 2022, where part of this work was done. }

\appendix
\section{Eigenvalue analysis of the relaxation system}\label{app:eigenvalues}
In this appendix we derive eigenvalues, eigenvectors and characteristic variables for the relaxation system \eqref{eq:relsyst} with given relaxation rate and speed $\eps, \lambda>0$. We rewrite the system in vector form as
\begin{equation}\label{eq:relsystvector}
  \ddt{\mathbf q^\eps} +
  \begin{pmatrix}
    0 & 1 \\
    \lambda^2 & 0
  \end{pmatrix} \ddx {\mathbf q^\eps} = \frac{1}{\eps}
  \begin{pmatrix}
    0 \\
    f(u^\eps)-v^\eps
  \end{pmatrix}
\end{equation}
using the notation $ \mathbf q^\eps = (u^\eps, q^\eps)$. This is a linear system of balance laws and by diagonalizing the flux matrix we obtain
\begin{equation}
    \begin{pmatrix}
    0 & 1 \\[3pt]
    \lambda^2 & 0
    \end{pmatrix}
    =
    \begin{pmatrix}
    -\frac{1}{\lambda} & \frac{1}{\lambda} \\[3pt]
    1 & 1
    \end{pmatrix}
    \begin{pmatrix}
    -\lambda & 0 \\[3pt]
    0 & \lambda
    \end{pmatrix}
    \begin{pmatrix}
      -\frac{\lambda}{2} & \frac 1 2 \\[3pt]
      \frac \lambda 2 & \frac 1 2
    \end{pmatrix}
    \eqcolon
    R \Lambda R^{-1},
  \end{equation}
  which reveals $-\lambda$ and $\lambda$ as eigenvalues of the system whereas its eigenvectors are given by $\mathbf r_1= (-  1/ \lambda , 1)^T$ and $\mathbf r_2= ( 1/ \lambda , 1)^T$. The characteristic variables consequently read
\begin{equation}
  R^{-1} \mathbf q^\eps = \frac 1 2
  \begin{pmatrix}
    v^\eps - \lambda u^\eps \\
    v^\eps + \lambda u^\eps
  \end{pmatrix}
  \eqcolon
 \begin{pmatrix}
   w^{\eps-}\\
   w^{\eps+}
  \end{pmatrix}.
 \end{equation}

\section{Schemes for differing relaxation speeds}\label{app:speeds}
In this appendix we provide evolution formulas and schemes for differing relaxation speeds in the coupled relaxation system \eqref{eq:relsyst11}. In Section \ref{sec:scheme}, the corresponding formulas are, for brevity, only discussed in the simplified case of equal relaxation speeds left and right from the coupling node (see Remark \ref{rem:equalspeeds}).

\paragraph{The coupled semi-discrete scheme}
Analogously to \eqref{eq:relaxationupwindschemeleftonelambda} and \eqref{eq:relaxationupwindschemerightonelambda} in Section \ref{sec:relscheme11} we obtain by substituting the coupling data \eqref{eq:relaxationcouplingdata} into \eqref{eq:relaxationupwindschemeleft} and \eqref{eq:relaxationupwindschemeright} in the general case 
\begin{subequations}
  \begin{align}
    \ddt u^\eps_{-1} &+ \frac{1}{2 \Delta x} \left( \frac{2 \lambda_2}{\lambda_1 +\lambda_2}v^\eps_0 + \frac{\lambda_1 - \lambda_2}{\lambda_1 + \lambda_2} v^\eps_{-1} - v^\eps_{-2} \right) \notag \\
                          & - \frac{\lambda_1}{2 \Delta x} \left(  \frac{2 \lambda_2^2 }{\lambda_1(\lambda_1 + \lambda_2)}u^\eps_0  - \left[ 2 + \frac{\lambda_1 - \lambda_2}{\lambda_1 + \lambda_2}\right] u^\eps_{-1} + u^\eps_{-2} \right) = 0 \\
    \ddt v^\eps_{-1} &+ \frac{\lambda_1^2}{2 \Delta x} \left(  \frac{2 \lambda_2^2 }{\lambda_1(\lambda_1 + \lambda_2)}u^\eps_N  + \frac{\lambda_2 - \lambda_1}{\lambda_1 + \lambda_2} u^\eps_{-1}- u^\eps_{-2} \right) \notag \\
                          &- \frac{\lambda_1}{2 \Delta x}  \left(  \frac{2 \lambda_2}{\lambda_1 +\lambda_2} v^\eps_0   - \left[2 + \frac{\lambda_2 - \lambda_1}{\lambda_1 + \lambda_2}\right] v^\eps_{-1} + v^\eps_{-2} \right) = \frac{1}{\eps} \left( f_1(u^\eps_{ -1}) - v^\eps_{-1} \right)
  \end{align}
\end{subequations}
for the evolution of the volumes left to the coupling node and 
\begin{subequations}
  \begin{align}
    \ddt u^\eps_{0} &+ \frac{1}{2 \Delta x} \left( v^\eps_{1} -\frac{\lambda_2 - \lambda_1}{\lambda_1 +\lambda_2}v^\eps_0 - \frac{2 \lambda_2}{\lambda_1 + \lambda_2} v^\eps_{-1}   \right) \notag \\
                          & - \frac{\lambda_2}{2 \Delta x} \left(u^\eps_{1}  - \left[ 2 + \frac{\lambda_2 - \lambda_1}{\lambda_1 + \lambda_2}\right] u^\eps_0  + \frac{2 \lambda_1^2 }{\lambda_2(\lambda_1 + \lambda_2)} u^\eps_{-1}\right) = 0, \\
    \ddt v^\eps_{0} &+ \frac{\lambda_2^2}{2 \Delta x} \left( u^\eps_{1} - \frac{\lambda_1 - \lambda_2}{\lambda_1 + \lambda_2}u^\eps_0  -  \frac{2 \lambda_1^2 }{\lambda_2(\lambda_1 + \lambda_2)}u^\eps_{-1} \right) \notag \\
                          &- \frac{\lambda_2}{2 \Delta x}  \left(v^\eps_{1} -   \left[ 2 + \frac{\lambda_1 - \lambda_2}{\lambda_1 + \lambda_2}\right]v^\eps_0   + \frac{2 \lambda_1}{\lambda_1 +\lambda_2} v^\eps_{-1}  \right) = \frac{1}{\eps} \left( f_2(u^\eps_{0}) - v^\eps_{0} \right),
  \end{align}
\end{subequations}
for the evolution of the volumes right to the coupling node. Consistency in the case $f_1=f_2$ to the scheme \eqref{eq:relaxationupwindscheme} in the uncoupled case is only given if also $\lambda_1=\lambda_2$, see Proposition \ref{prop:consistency}.

\paragraph{The limit scheme}
If we allow for differing relaxation speeds when substituting the coupling data given by \eqref{eq:relaxationcouplingdata} and \eqref{eq:limitschemers} into \eqref{eq:limitschemel} and \eqref{eq:limitschemer}, in analogy to the derivation of \eqref{eq:limitschemesub} in Section \ref{sec:limit}, we obtain
\begin{subequations}
  \begin{align}
    u^{n+1}_{-1} &= u^{n}_{-1} - \frac{\Delta t}{2 \Delta x}\left( \frac{2 \lambda_2}{\lambda_1 +\lambda_2}f_2(u_0^n) + \frac{\lambda_1 - \lambda_2}{\lambda_1 + \lambda_2} f_1(u_{-1}^n) - f_1(u_{-2}^n) \right) \notag \\
    &\quad + \frac{\lambda_1}{2 \Delta x} \left(  \frac{2 \lambda_2^2 }{\lambda_1(\lambda_1 + \lambda_2)}u^n_0  - \left[ 2 + \frac{\lambda_1 - \lambda_2}{\lambda_1 + \lambda_2}\right] u^n_{-1} + u^n_{-2} \right), \label{eq:limitschemesubl}\\
    u^{n+1}_{0} &= u^{n}_{0} - \frac{\Delta t}{2 \Delta x}  \left( f_2(u^n_{1}) -\frac{\lambda_2 - \lambda_1}{\lambda_1 +\lambda_2}f_2(u^n_0) - \frac{2 \lambda_2}{\lambda_1 + \lambda_2} f_1(u^n_{-1})   \right) \notag \\
    &\quad + \frac{\lambda_2 \Delta t}{2 \Delta x}  \left(u^n_{1}  - \left[ 2 + \frac{\lambda_2 - \lambda_1}{\lambda_1 + \lambda_2}\right] u^n_0  + \frac{2 \lambda_1^2 }{\lambda_2(\lambda_1 + \lambda_2)} u^n_{-1}\right).\label{eq:limitschemesubr}                                                  
    \end{align}
  \end{subequations}
  These formulas then replace \eqref{eq:limitschemel} and \eqref{eq:limitschemer} in scheme \eqref{eq:limitscheme}. Also in this more general case the limit scheme can be rewritten in the conservative form \eqref{eq:conservativeform} using modified numerical fluxes given by
\begin{equation}\label{eq:numfluxgeneralspeeds}
   F_{j-1/2}^n = \begin{cases}
                 \frac 1 2 \, (f_1(u_{j-1}^n) + f_1(u_{j}^n))  - \frac {\lambda_1} 2 (u_j^n-u_{j-1}^n)  &\text{if }j<0, \\[5pt]
                 \frac 1 {\lambda_1 + \lambda_2} \, ( \lambda_1 f_1(u_{-1}^n) + \lambda_2 f_2(u_{0}^n))  - \frac 1  {\lambda_1 + \lambda_2} ( \lambda_2^2\, u_0^n- \lambda_1^2\, u_{-1}^n) &\text{if }j=0, \\[5pt]
                 \frac 1 2 \, ( f_2(u_{j-1}^n) + f_2(u_{j}^n))  - \frac {\lambda_2} 2 (u_j^n-u_{j-1}^n) &\text{if }j>0. 
               \end{cases}
\end{equation}
\bibliographystyle{abbrv}      
\bibliography{coupling.bib}

\end{document}